\pgfplotsset{compat=1.12}
\newtheorem{theorem}{Theorem}[section]
\newtheorem{lemma}[theorem]{Lemma}
\newtheorem{corollary}[theorem]{Corollary}
\newtheorem{proposition}[theorem]{Proposition}
\theoremstyle{definition}
\newtheorem{example}[theorem]{Example}
\newtheorem{remark}[theorem]{Remark}
\newtheorem{definition}[theorem]{Definition}
\newtheorem{algorithm}[theorem]{Algorithm}
\numberwithin{equation}{section}
\newcommand{\Z}{\mathbb{Z}}
\newcommand{\Q}{\mathbb{Q}}
\newcommand{\C}{\mathbb{C}}
\newcommand{\Sing}{{\rm Sing}}
\def\fd{\mathbb{F}_\delta}
\def\fol{$\mathcal{F}$\xspace}
\def\zf{Z_\mathcal{F}}
\title[Algebraic integrability by extension to Hirzebruch surfaces]{Algebraic integrability of planar polynomial vector fields by extension to Hirzebruch surfaces}
\author[C. Galindo]{Carlos Galindo}
\address{Universitat Jaume I, Campus de Riu Sec, Departamento de Matem\'aticas \& Institut Universitari de Matem\`atiques i Aplicacions de Castell\'o, 12071
Caste\-ll\'on de la Plana, Spain.}\email{galindo@uji.es}
\author[F. Monserrat]{Francisco Monserrat}
\address{Instituto Universitario de
Matem\'atica Pura y Aplicada, Universidad Polit\'ecnica de Valencia,
Camino de Vera s/n, 46022 Valencia (Spain).}
\email{framonde@mat.upv.es}
\author[E. P\'erez-Callejo]{Elvira P\'erez-Callejo}
\address{Universitat Jaume I, Campus de Riu Sec, Departamento de Matem\'aticas \& Institut Universitari de Matem\`atiques i Aplicacions de Castell\'o, 12071
Caste\-ll\'on de la Plana, Spain.} \email{callejo@uji.es}
\subjclass[2020]{34C05, 32S65, 14J26}
\keywords{Planar polynomial vector fields; Rational first integrals; Hirzebuch surfaces}
\thanks{Partially supported by MCIN/AEI/10.13039/501100011033 and by ``ERDF A way of making Europe", grants  PGC2018-096446-B-C22 and RED2018-102583-T, by MCIN/AEI/10.13039/501100011033 and by "ESF Investing in your future”, grant PRE2019-089907, as well as by Universitat Jaume I, grant UJI-B2021-02.}
\begin{document}
\begin{abstract}
We study algebraic integrability of complex planar polynomial vector fields $X=A (x,y)(\partial/\partial x) + B(x,y) (\partial/\partial y) $ through extensions to Hirzebruch surfaces. Using these extensions, each vector field $X$ determines two infinite families of planar vector fields that depend on a natural parameter which, when $X$ has a rational first integral, satisfy strong properties about the dicriticity of the points at the line $x=0$ and of the origin. As a consequence, we obtain new necessary conditions for algebraic integrability of planar vector fields and, if $X$ has a rational first integral, we provide a region in $\mathbb{R}_{\geq 0}^2$ that contains all the pairs $(i,j)$ corresponding to monomials $x^i y^j$ involved in the generic invariant curve of $X$.
\end{abstract}

\maketitle
\section{Introduction}
The study of algebraic solutions of ordinary differential equations goes back to the 19th century \cite{Sch73-per} and the problem of deciding whether all their solutions are algebraic was completely solved for linear homogeneous differential equations of the second order with rational coefficients in the mid 20th century (see \cite{kle56-per} for example). The next step was to consider polynomial differential equations of first order and first degree. Remarkable mathematicians as Darboux \cite{dar}, Poincar\'e \cite{poi11, poi12, poi13, poi21}, Painlev\'e \cite{pai} and Autonne \cite{aut} were authors of seminal papers in this topic. One of the main proposed problems was to characterize those complex planar differential systems which are algebraically integrable. As said, this problem is more than a century old and, despite the fact that much progress has been made, it remains unsolved. The question of deciding about algebraic integrability is related to other very interesting problems as to bound the number of limit cycles of a (real) differential system \cite{lli2, lli,Llibresym} or the center problem \cite{sch, d-l-a}.

When a complex planar polynomial differential system is algebraically integrable, with primitive rational first integral $f/g$, $f, g \in \mathbb{C}[x,y]$ (see Definition \ref{primitive}), all  their invariant curves are algebraic and their irreducible components are those of the elements of the pencil of curves with equations $\alpha f + \beta g=0$, where $(\alpha:\beta)$ runs over the complex projective line \cite{zam1}. Only finitely many pairs $(\alpha:\beta)$ (named remarkable values) give rise to reducible polynomials $\alpha f + \beta g$ in $\mathbb{C}[x,y]$. Remarkable values are of importance for the phase portrait of the system \cite{CGGL2, FL}. Jouanolou in \cite{Joua} (see also \cite{CL2} and \cite{Ch,CLS,Che}) proved that the existence of enough algebraic invariant curves for the planar differential system implies its algebraic integrability.

Poincaré \cite{poi11, poi12, poi13, poi21}, considering an algebraically integrable planar differential system and looking for a rational first integral, posed the problem of giving an upper bound on the degree of the first integral which depends only on the degree of the differential system. Lins-Neto in \cite{l-n} proved that there is no such a bound even for families of systems where the analytic type and the number of singularities of the associated vector field remain constant. This problem has attracted a lot of attention and one can find many articles related to it (even in higher dimension) \cite{poi21, ce-li, car,ca-ca,zam1,soa1,soa2,Walcher,zam2,pere,es-kl,c-l,GalMon2010,GalMon2014,PerSva}. The literature also contains several algorithms which, in specific cases, allow us to compute first integrals \cite{GalMon2006,fergia,GalMon2014, fergal1, bost, fergal2}.

We are concerned with  rational first integrals of planar systems; however other interesting classes of first integrals  (like elementary, Liouvillian or Darboux first integrals) are being studied (see for instance the survey \cite{LZDarb} and references therein).

Many of the recent advances on algebraic integrability and the Poincar\'e problem have been obtained by considering foliations of the projective plane as one can see in several previous references. Since the involved foliations have singularities, a useful tool for addressing integrability and related problems consists of successively blowing up the projective plane and the blown up surfaces at the ordinary singularities of the foliation to reach a transformed foliation with at most simple singularities. It is well-known that the relatively minimal models of smooth complex rational surfaces $Z$ are the complex projective plane $\mathbb{P}^2$ and the complex Hirzebruch surfaces $\mathbb{F}_\delta$, $\delta \neq 1$ being a nonnegative integer. This means that $Z$ can be obtained from $\mathbb{P}^2$ or $\mathbb{F}_\delta$ after finitely many blowups.

In this paper, instead of considering foliations on $\mathbb{P}^2$, we propose to extend complex planar polynomial vector fields $X$ to foliations on Hirzebruch surfaces $\mathbb{F}_\delta$, $\delta \geq 0$. In this way, we have an extension $\mathcal{F}_X^\delta $ for each nonnegative integer $\delta$ corresponding to a complex Hirzebruch surface. We hope that this procedure will contribute to substantial progress towards the above described problems.

Section \ref{prelim} briefly recalls the essentials about algebraically integrable complex planar vector fields and singular algebraic foliations on surfaces, while Subsection \ref{Hirzebruch} does the same with the basics about Hirzebruch surfaces.  Keeping in mind the case of the projective plane (see \cite{Campiol} for instance), a foliation on a Hirzebruch surface can be given in a simple way as an affine vector field,  or an affine differential $1$-form, in four variables, where the coefficients are bigraded homogeneous polynomials which satisfy certain conditions with respect to radial vector fields, or Euler-type conditions (see the first part of Subsection \ref{sfh}).

\textit{Algorithm} \ref{algoritmo} considers a planar vector field $X$ (in the variables $x$ and $y$) and extends $X$ to a bigraded homogeneous affine $1$-form defining a foliation $\mathcal{F}^\delta_X$ on each Hirzebruch surface $\mathbb{F}_\delta$, where we take homogeneous coordinates $(X_0,X_1;Y_0, Y_1)$. These foliations are the key objects in our  \textit{first main result}, \textit{Theorem} \ref{gordo}, which proves that when $X \neq c \frac{\partial}{\partial y}$, $c \in \mathbb{C} \setminus \{0\}$, is algebraically integrable,  there exists a nonnegative integer $\delta_1$ forcing a very special local behaviour of the extended foliations $\mathcal{F}^\delta_X$ according to the position of $\delta$ with respect to $\delta_1$. More specifically, the point $(0,1;1,0)$ cannot be a dicritical singularity of $\mathcal{F}^{\delta_1}_X$ but it is always a dicritical singularity of $\mathcal{F}^\delta_X$ whenever $\delta < \delta_1$, and $(0,1;0,1)$ is the unique dicritical singularity of $\mathcal{F}^\delta_X$ belonging to the curve $X_0=0$ when $\delta > \delta_1$.

This result can be translated to the language of planar vector fields avoiding the use of Hirzebruch surfaces as we state in \textit{Corollary} \ref{ccc}.  It assigns to a vector field $X$ two families of infinitely many vector fields $\{X_{1,0}^\delta\}_{\delta \geq 0}$ and $\{X_{1,1}^\delta\}_{\delta \geq 0}$ and states that, when $X$ is algebraically integrable, the vector fields in these families must satisfy specific conditions of dicriticity at the origin and the points in the line $x=0$. In this way, {\it algebraic integrability of $X$ forces strong conditions on other derived planar vector fields giving rise to unknown necessary conditions for algebraic integrability.} It is worthwhile to add that our procedure discards the existence of rational first integrals for vector fields that do not satisfy the conditions in \cite[Corollary 5]{GineGrau} and  that, recently, differential Galois theory  has also be used for giving necessary conditions of integrability \cite{Acosta}. As before mentioned, a somewhat related problem (in the real setting) is the searching of algebraic limit cycles of planar polynomial vector fields. These limit cycles only exist when the vector field has no rational first integral  \cite{llibreopen} but the first integrals can be used for computing limit cycles when piecewise differential systems are considered \cite{Llibresym}.

The last section, Section \ref{La5}, contains our \textit{second main result}, \textit{Theorem} \ref{gordo2}. It considers an algebraically integrable vector field $X$ and that obtained by swapping their variables $X'$, and
uses their extensions to Hirzebruch surfaces to determine a region in $\mathbb{R}^{2}_{\geq 0}$ where all the pairs $(i,j)$ corresponding to monomials $x^i y^j$ with nonzero coefficient of the generic algebraic invariant curve $g$ of  $X$ are included. \textit{Corollary} \ref{El52} is deduced from Theorem \ref{gordo2} and when $\delta_1=0$ (respectively, $\delta'_1=0$) gives a bound on the degree of the first integral of $X$ depending on the value $\delta'_1$ (respectively, $\delta_1$) and the maximum degrees of the monomials $x^i$ and $y^j$ appearing in $g$ with nonzero coefficient.

\section{Preliminaries}
\label{prelim}

\subsection{Algebraically integrable complex  planar polynomial vector fields}

Let $\mathbb{C}[x,y]$ be the ring of polynomials in two variables $x$ and $y$ with complex coefficients and let $\mathbb{C}(x,y)$ be its quotient field. Consider a planar polynomial differential system
\begin{equation}\label{system}
\dot x=A(x,y),\;\;\; \dot y=B(x,y),
\end{equation}
where $A(x,y), B(x,y)\in \mathbb{C}[x,y]$ are coprime or, equivalently, the planar vector field
\begin{equation}\label{planar}
X=A(x,y)\frac{\partial }{\partial x}+B(x,y)\frac{\partial }{\partial y}.
\end{equation}
This planar vector field can also be determined by the differential $1$-form
$$\omega_X:=B(x,y)dx-A(x,y)dy.$$

 A \emph{rational first integral of  the system (\ref{system}) (or of $X$)} is a rational function $f\in \mathbb{C}(x,y)\setminus \mathbb{C}$ satisfying:
$$X(f)=A(x,y)\frac{\partial f}{\partial x}+B(x,y)\frac{\partial f}{\partial y} = 0$$
(or, alternatively, $\omega_X\wedge df = 0$).
We say that (\ref{system}) (or $X$) is \emph{algebraically integrable} if it admits a rational first integral $f$.

A rational function $h=h_1/h_2$ is said to be \emph{reduced} when $h_1$ and $h_2$ are coprime. The degree of a rational function $h$, $\deg(h)$, is the maximum of the degrees of $h_1$ and $h_2$. Moreover, $h\in\C(x,y)$ is said to be \emph{composite} if it can be written as $h=u\circ h'$, where $h'\in \mathbb{C}(x,y)\setminus \mathbb{C}$ and $u\in \mathbb{C}(t)$ with $\deg(u)\geq 2$. Otherwise $h$ is said to be \emph{noncomposite}.

An algebraically integrable differential system (\ref{system}) admits a noncomposite reduced rational first integral $f$. Any rational function of the form $h=u\circ f$, $u\in \mathbb{C}(t)\setminus \mathbb{C}$, is also a rational first integral of (\ref{system}); in addition, all the reduced rational first integrals of (\ref{system}) are of this form (see \cite[Theorem 10]{bost} for a proof). As a consequence, noncomposite reduced rational first integrals coincide with rational first integrals of minimal degree.
	
\begin{definition}\label{primitive}
A rational first integral of the system (\ref{system}) (or of $X$) is named \emph{primitive} if it is reduced and noncomposite.
\end{definition}

Let $h(x,y)$ be a nonzero polynomial in $\C[x,y]$. The algebraic curve with equation $h(x,y)=0$ is an \emph{invariant algebraic curve} of the system (\ref{system}) (or of $X$) if $X(h)=k(x,y)h(x,y)$ for some polynomial $k(x,y)$.

Let $f=\frac{f_1(x,y)}{f_2(x,y)}$ be a primitive rational first integral of (\ref{system}). Then the curves in $\C^2$ of the pencil $\alpha_1 f_1(x,y)+\alpha_2 f_2(x,y)=0$,  $(\alpha_1:\alpha_2)\in\mathbb{P}^1$, are reduced and irreducible with the exception of those corresponding to finitely many values in $\mathbb{P}^1$ (see, for instance, \cite[Chapter 2, Theorem 3.4.6]{Joua}). The
 solutions of (\ref{system}) are algebraic and are given by the irreducible components of the curves in this pencil (irreducible algebraic invariant curves).  Abusing the notation, in this paper, the expression $\alpha_1 f_1(x,y)+\alpha_2 f_2(x,y)$, regarded as a polynomial in $\mathbb{C}(\alpha_1,\alpha_2)[x,y]$, where $\alpha_1,\alpha_2$ are also considered variables, will be named the \emph{generic algebraic invariant curve} of $X$ (associated to $f$).

\subsection{Singular algebraic foliations on surfaces}
Many interesting results about algebraic integrability of (singular) planar vector fields come from the study of singular algebraic foliations on the projective plane. This happens  because these foliations are determined by homogeneous vector fields which extend the field of directions defined by the original planar vector field to the projective plane. In this article, we will use Hirzebruch surfaces instead of the projective plane.

We start by briefly recalling what a foliation is and, also, by providing some additional definitions and results we will use.

A (singular algebraic) \textit{foliation} \fol on a  smooth complex projective surface $S$ can be defined by a family of pairs  $\{(U_i,v_i)\}_{i\in I}$,  given by an open covering $\{U_i\}_{i\in I}$ of $S$ and nonvanishing vector fields $v_i$ on $\Theta_S(U_i)$, $\Theta_S$ being the tangent sheaf of $S$, such that for any three indices $i, j, k$ in $I$, the following equalities hold:
\begin{equation*}
\begin{array}{c}
v_i=g_{ij}v_j \text{ on }U_i\cap U_j, \mbox{ for some element } g_{ij}\in \mathcal{O}_S(U_i\cap U_j)^* \text{ and}\\
g_{ij}g_{jk}=g_{ik} \text{ on }U_i\cap U_j \cap U_k.
\end{array}
\end{equation*}
The \emph{singular set} $\Sing({\mathcal F})$ of $\mathcal F$ is the subset of $S$ such that $\Sing({\mathcal F})\cap U_i$ is the set of zeros of $v_i$ for all $i\in I$. The points in $\Sing({\mathcal F})$ are called \emph{singularities} of ${\mathcal F}$ and, when all the vector fields $v_i$ have isolated zeros, we say that ${\mathcal F}$ has \emph{isolated singularities} (and, in this case, $\Sing({\mathcal F})$ is discrete). All the foliations considered in this paper have isolated singularities.

The cocycle $(g_{ij})$ determines an invertible sheaf $\mathcal{L}^*$ of $S$ and the family $\{(U_i,v_i)\}_{i\in I}$ gives rise to a global section either in $H^{0}(S,\Theta_S \otimes \mathcal{L}^*)$ or in $H^0(S, \mathrm{Hom}_{\mathcal{O}_S}(\mathcal{L}, \Theta_S))$, where $\mathcal{L}$ is the dual of $\mathcal{L}^*$. Since two global sections define the same foliation whenever they differ by a nonzero scalar, \fol can be seen as a class $[\mathfrak{f}]$ in $\mathbb{P}(H^{0}(S,\Theta_S \otimes \mathcal{L}^*))$ of a global section $\mathfrak{f} \in H^{0}(S,\Theta_S \otimes \mathcal{L}^*)$. 

A foliation \fol can also be defined by using $1$-forms and then it is given by a family $\{(U_i,\omega_i)\}_{i\in I}$, where $\{U_i\}_{i\in I}$ is an open covering of $S$, $\omega_i\in\Omega_S^1(U_i)$ is a nonzero regular differential $1$-form on $U_i$ and, for indices $i, j$ and $k$ in $I$, it holds that
\begin{equation} \label{ec_foliation_gen}
\begin{array}{c}
\omega_i=f_{ij}\omega_j \text{ on }U_i\cap U_j, \mbox{ for some element } f_{ij}\in \mathcal{O}_S(U_i\cap U_j)^* \text{ and}\\
\omega_{ij}\omega_{jk}=\omega_{ik} \text{ on }U_i\cap U_j \cap U_k.
\end{array}
\end{equation}

\section{Foliations on Hirzebruch surfaces}\label{hirz}
We desire to study singular complex planar polynomial vector fields through singular foliations on Hirzebruch surfaces. For this reason we start with a brief introduction to this class of surfaces.

\subsection{Hirzebruch surfaces}
\label{Hirzebruch}
Let $\delta$ be a nonnegative integer. For each value $\delta$, there is a rational surface $\mathbb{F}_\delta$ named the  $\delta$th complex Hirzebruch surface. It is defined as the projectivization of the sheaf $\mathcal{O}_{\mathbb{P}^1}\oplus \mathcal{O}_{\mathbb{P}^1}(\delta)$:
\[
\fd:=\mathbb{P}\left(\mathcal{O}_{\mathbb{P}^1}\oplus \mathcal{O}_{\mathbb{P}^1}(\delta) \right),\]
$\mathbb{P}^1$ being  the complex projective line \cite[Chapter V, Corollary
2.13]{Har}.

$\fd$ is a ruled surface which has the structure of a toric variety. Thus, $\fd$ is the quotient of the Cartesian product $\left(\mathbb{C}^2\setminus \{\mathbf{0}\}\right)\times \left(\mathbb{C}^2\setminus \{\mathbf{0}\}\right)$ by the following action on the algebraic torus $\left(\mathbb{C}\setminus \{0\}\right)\times \left(\mathbb{C}\setminus \{0\}\right)$:
\[
(\lambda,\mu):(X_0,X_1;Y_0,Y_1) \mapsto (\lambda X_0,\lambda X_1; \mu Y_0, \lambda^{-\delta} \mu Y_1),
\]
where $(X_0,X_1;Y_0,Y_1)$ are coordinates in $\left(\mathbb{C}^2\setminus \{\mathbf{0}\}\right)\times \left(\mathbb{C}^2\setminus \{\mathbf{0}\}\right)$.

The homogeneous coordinate ring of $\fd$ is the polynomial ring in four variables $\mathbb{C}[X_0,X_1,Y_0,Y_1]$, where the variables are bigraded as follows: $\deg X_0 = \deg X_1 =(1,0)$, $\deg (Y_0)= (0,1)$  and $\deg Y_1 = (-\delta, 1)$.

The above given action $\mapsto$ provides a surjective map
\[
\varphi: \left(\mathbb{C}^2\setminus \{\mathbf{0}\}\right)\times \left(\mathbb{C}^2\setminus \{\mathbf{0}\}\right) \rightarrow \mathbb{F}_\delta,
\]
which allows us to consider the following four affine open sets of $\fd$:
\[
U_{ij}:= \left\{ \varphi(X_0,X_1;Y_0,Y_1) \; | \; X_i \neq 0 \; \mbox{ and } \; Y_j \neq 0 \right\},
\]
where $0 \leq i, j \leq 1$, giving rise to an open cover of $\fd$.

Now, on $U_{00}$,
$$\varphi(X_0,X_1;Y_0,Y_1) = \varphi\left(1,\frac{X_1}{X_0},1,\frac{X_0^\delta Y_1}{Y_0}\right),$$
and one can identify $U_{00}$ with $\mathbb{C}^2$ after setting $\varphi(X_0,X_1;Y_0,Y_1)=(x_{00},y_{00})$, where $x_{00}=\frac{X_1}{X_0}$ and $y_{00}=\frac{X_0^\delta Y_1}{Y_0}$. Analogous procedures can be carried out for identifying the remaining sets $U_{ij}$ with $\mathbb{C}^2$ and one gets the change of coordinate maps, in the overlaps of the sets $U_{ij}$, which provide the global structure of $\fd$. For instance, the change of coordinates in the intersection $U_{00} \cap U_{10}$ is
\[
\left(x_{00},y_{00}\right) \rightarrow \left(\frac{1}{x_{00}}, x_{00}^\delta y_{00}\right) = \left(x_{10},y_{10}\right),
\]
where $\left(x_{10}=\frac{X_0}{X_1},y_{10}=\frac{X_1^\delta Y_1}{Y_0}\right)$ are local coordinates in the set $U_{10}$.

To conclude this subsection, we notice that the divisor class group of $\fd$ \cite{Har} is generated by the linear equivalence classes of two divisors $F$ and $M$ such that $F^2=0$, $M^2= \delta$ and $F \cdot M=1$.  Moreover, the effective classes are those of divisors of the form $d_1 F + d_2 M$, where $d_1$ and $d_2$ are integers such that $d_1 + \delta d_2 \geq 0$ and $d_2 \geq 0$. Finally, the nonzero global sections $\mathcal{O}_{\mathbb{F}_\delta}(d_1F+d_2M)$ of effective divisors $d_1 F + d_2 M$ correspond to bigraded homogeneous polynomials $F(X_0,X_1;Y_0,Y_1) \in \mathbb{C}[X_0,X_1,Y_0,Y_1]$ of bidegree $(d_1,d_2)$. These polynomials are sums of monomials $X_0^a  X_1^b  Y_0^c Y_1^d$ such that $a+b-\delta d = d_1$ and $c +d = d_2$.

Next we study foliations on Hirzebruch surfaces showing that they have simple bigraded expressions.

\subsection{Singular foliations on Hirzebruch surfaces. Bigraded expressions and reduction of singularities}
\label{sfh}
In this subsection, we show that a singular foliation on a Hirzebruch surface can be given by an affine vector field (or an affine differential $1$-form) determined by  suitable bigraded homogeneous polynomials in the variables $X_0,X_1,Y_0,Y_1$. This will ease our study of planar vector fields through foliations on Hirzebruch surfaces.

Let $\mathcal{F} = [\mathfrak{f}]$ be a singular foliation (with isolated singularities) on a Hirzebruch surface. Then it is given by $\mathfrak{f} \in H^{0}(\mathbb{F}_\delta,\Theta_{\mathbb{F}_\delta} \otimes \mathcal{L}^*)$, where $\mathcal{L}=\mathcal{O}_{\mathbb{F}_\delta}(-d_1 F -d_2 M)$, $d_2 \geq 0$ and $d_1 \geq 0$ when $\delta=0$ or $d_1 \geq -1$ otherwise (see \cite{GalMonOliv}).

By \cite[Section 3]{GalMonOliv}, $\mathcal{F} $ can be given through an affine vector field or through an affine $1$-form. Indeed, on the one hand, $\mathcal{F}$  is uniquely determined by an affine vector field $V$ as follows:
\[
V=V_0 \frac{\partial}{\partial X_0} + V_1 \frac{\partial}{\partial X_1} + W_0 \frac{\partial}{\partial Y_0} + W_1 \frac{\partial}{\partial Y_1},\]
where $V_0$, $V_1$, $W_0$ and $W_1$ are bigraded homogeneous polynomials without nonconstant common factors (not all of them equal to 0),
$V_0,V_1\in H^0\left(\fd,\mathcal{O}_{\fd}((d_1+1)F+ d_2 M)\right)$, $W_0 \in H^0\left(\fd,\mathcal{O}_{\fd}(d_1 F + (d_2 + 1)M)\right)$ and $W_1\in H^0\left(\fd, \mathcal{O}_{\fd}((d_1-\delta)F + (d_2 + 1)M)\right)$, up to the addition of multiples of the radial vector fields \begin{gather*}
R_1:=X_0 \frac{\partial}{\partial X_0} + X_1 \frac{\partial}{\partial X_1} - \delta Y_1 \frac{\partial}{\partial Y_1}\; \text{ and}\\
R_2:=Y_0 \frac{\partial}{\partial Y_0} + Y_1 \frac{\partial}{\partial Y_1}.
\end{gather*}

On the other hand, $\mathcal{F}$  is also uniquely determined by an affine  differential $1$-form $\Omega$:
\[
\Omega=A_0 dX_0 + A_1  dX_1 + B_0 dY_0 + B_1 dY_1,
\]
where $A_0$, $A_1$, $B_0$ and $B_1$ are bigraded homogeneous polynomials without nonconstant common factors (not all of them equal to 0), $$A_0,A_1 \in H^0\left(\fd, \mathcal{O}_{\fd}((d_1 - \delta + 1)F+ (d_2 + 2)M)\right),$$ $$B_0 \in H^0\left(\fd, \mathcal{O}_{\fd}((d_1 - \delta + 2)F +( d_2 + 1)M)\right)$$ and $B_1 \in H^0\left(\fd, \mathcal{O}_{\fd}((d_1+2)F+ (d_2 + 1)M)\right)$, which satisfy the following two conditions, called Euler-type conditions:
\begin{gather*}
\Omega(R_1)=A_0 X_0+A_1 X_1 - \delta B_1 Y_1 =0 \text{ and}\\
\Omega(R_2)=B_0 Y_0 + B_1 Y_1=0.
\end{gather*}

Notice that, considering the before defined open cover of  $\{U_{ij}\}_{0 \leq i, j \leq 1}$, affine vector fields or affine $1$-forms allow us to get families $\{(U_i,v_i)\}_{i\in I}$ or $\{(U_i,\omega_i)\}_{i\in I}$ defining $\mathcal{F}$. We will preferably use affine $1$-forms.

Our foliations have isolated singularities. One of the main techniques for treating these singularities is the blowup of the surface (corresponding to the foliation) at the singular points of the foliation and the consideration of the strict transform of the foliation on the blown up surface. Some references about the blowup procedure for singular foliations on smooth projective surfaces are \cite{Seiden,Dumor,AFJ} (see also \cite{Cas}). Roughly speaking, blowing up a smooth complex surface $S$, at a point $p$, consists of replacing that point $p$ by a complex projective line regarded as the set of limit directions at $p$. The obtained surface is defined by local charts and usually denoted by Bl$_p(S)$.

As explained, a singular foliation $\mathcal{G}$ on $S$ can be locally given by  differential $1$-forms. These forms, after blowing up at a singularity $p$, define differential $1$-forms on affine charts of Bl$_p(S)$ which, after gluing, give rise to a foliation on the surface  Bl$_p(S)$ named the strict transform $\tilde{\mathcal{G}}$ of $\mathcal{G}$. A detailed description of this procedure is given in \cite[Section 4]{fergal1}.

Assume that $\mathcal{G}$ is given at $p$, in local coordinates $x$ and $y$, by a local differential $1$-form $\omega =A(x,y) dx + B(x,y) dy$.  Then, we define the multiplicity of  $\mathcal{G}$ at $p$ as the nonnegative integer $m$  corresponding to the first nonvanishing jet $\omega_m :=a_m(x,y) dx + b_m(x,y) dy$ of $\omega$. Notice that $a_m(x,y)$ and $b_m(x,y)$ are homogeneous polynomials in two variables of degree $m$ and that $p$ is a singularity of $\mathcal{G}$ if and only if $m\geq 1$. If this is the case, defining $d(x,y):= x a_m(x,y)+y b_m(x,y) $, we say that $p$ is a {\it terminal dicritical singularity} of $\mathcal{G}$ whenever $d(x,y)$ is the zero polynomial. In addition, $p$ is a {\it simple singularity} if $m=1$ and the matrix
\[
 \left(
    \begin{array}{cc}
      \frac{\partial b_1}{\partial x} & \frac{\partial b_1}{\partial y} \\
     - \frac{\partial a_1}{\partial x} & - \frac{\partial a_1}{\partial y}\\
    \end{array}
  \right)
\]
has  two eigenvalues $\lambda_1, \lambda_2$ such that the product $\lambda_1 \lambda_2$ does not vanish and their quotient is not a positive rational number, or $\lambda_1 \lambda_2 = 0$ and $\lambda_1^2 + \lambda_2^2$ does not vanish. Nonsimple singularities are named to be {\it ordinary}. Notice that a terminal dicritical singularity is an ordinary singularity.

The singularities of a foliation $ \mathcal{G}$ with isolated singularities can be reduced by blowing up at the ordinary singular points of $\mathcal{G}$ and at those belonging to its successive strict transforms. The following result summarizes well-known facts on reduction of foliations and terminal dicritical singularities (see \cite{Seiden,Bru} and \cite[Theorem 1 and Proposition 1]{fergal1}).

\begin{theorem}
\label{jmaa}
Let $\mathcal{G}$ be a singular foliation with isolated singularities on a smooth projective surface $S$. Then:
\begin{enumerate}
\item There is a sequence of finitely many point blowups, $\pi: Z \rightarrow S$, such that the strict transform of $\mathcal{G}$  on $Z$ has no ordinary singularity.
\item A singularity $p$ of $\mathcal{G}$ is not terminal dicritical if and only if the exceptional divisor of the surface {\rm Bl}$_p(S)$ is invariant by the strict transform of $\mathcal{G}$ on {\rm Bl}$_p(S)$.
\end{enumerate}

\end{theorem}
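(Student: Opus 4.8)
The plan is to establish the two assertions separately, in the order (2) then (1): assertion (2) is a computation in the charts of a single blowup, whereas assertion (1) is the reduction-of-singularities theorem of Seidenberg, whose proof already uses the dichotomy of (2).

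For (2), fix a singular point $p$ and local coordinates $x,y$ at $p$ in which $\mathcal{G}$ is given by $\omega = A(x,y)\,dx + B(x,y)\,dy$ with first nonvanishing jet $\omega_m = a_m(x,y)\,dx + b_m(x,y)\,dy$, where $m=m_p(\mathcal{G})\geq 1$ is the multiplicity of $\mathcal{G}$ at $p$, and set $d(x,y) = x\,a_m(x,y) + y\,b_m(x,y)$. In the chart $x = u$, $y = uv$ of $\mathrm{Bl}_p(S)$, in which the exceptional divisor $E$ is $\{u=0\}$, the pullback of $\omega$ is $\omega^{*} = \bigl(A(u,uv) + v\,B(u,uv)\bigr)\,du + u\,B(u,uv)\,dv$. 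Writing $A(u,uv) = \sum_{k\geq m} u^{k}a_k(1,v)$ and $B(u,uv) = \sum_{k\geq m} u^{k}b_k(1,v)$, the coefficient of $du$ equals $u^{m}d(1,v) + O(u^{m+1})$ and that of $dv$ equals $u^{m+1}b_m(1,v) + O(u^{m+2})$. As $d$ is homogeneous of degree $m+1$, the polynomial $d(1,v)$ vanishes identically if and only if $d \equiv 0$; moreover, if $d\equiv 0$ then $b_m\not\equiv 0$, since otherwise $x\,a_m\equiv 0$ would force $a_m\equiv 0$ as well, contradicting that $m$ is the multiplicity. Consequently the strict transform $\tilde{\mathcal{G}}$ is defined by $\tilde\omega := u^{-\ell}\omega^{*}$ with $\ell = m$ when $d\not\equiv 0$ and $\ell = m+1$ when $d\equiv 0$. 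If $p$ is not terminal dicritical, i.e. $d\not\equiv 0$, the coefficient of $dv$ in $\tilde\omega$ equals $u\bigl(b_m(1,v) + O(u)\bigr)$, so $du\wedge\tilde\omega$ is divisible by $u$, which is precisely the condition for $E$ to be invariant by $\tilde{\mathcal{G}}$. If $p$ is terminal dicritical, i.e. $d\equiv 0$, the coefficient of $dv$ in $\tilde\omega$ equals $b_m(1,v) + O(u)$, which does not vanish on $\{u=0\}$, so $E$ is not invariant. The chart $x = u'v'$, $y = v'$ yields the same conclusion with $d(v',1)$ replacing $d(1,v)$, and the two charts cover $E$; this proves (2).

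For (1), I would recall the proof of Seidenberg's theorem. Since $\mathcal{G}$ has isolated singularities and there are only finitely many ordinary ones, it suffices to attach to each ordinary singularity a value in a well-ordered set that strictly drops under the relevant blowup. The primary ingredient is the multiplicity: after blowing up at an ordinary point $p$, every singularity $q$ of $\tilde{\mathcal{G}}$ on the new exceptional divisor $E$ satisfies $m_q(\tilde{\mathcal{G}})\leq m_p(\mathcal{G})$, and in the non-dicritical case (which by (2) means $d\not\equiv 0$) one also controls $\sum_{q\in E} m_q(\tilde{\mathcal{G}})$ in terms of $m_p(\mathcal{G})$, while in the terminal dicritical case ($d\equiv 0$) the singularities of $\tilde{\mathcal{G}}$ on $E$ are the isolated tangency points with $E$, whose number and multiplicities are bounded through the degree of the leading polynomial appearing in (2). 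When $m_p(\mathcal{G})=1$ but $p$ is non-simple, one uses as a secondary invariant the eigenvalues $\lambda_1,\lambda_2$ of the associated matrix: if $\lambda_1/\lambda_2$ is a positive rational $a/b$ in lowest terms, blowing up replaces $p$ by singularities whose corresponding ratios have strictly smaller $a+b$; if the linear part is nilpotent, a bounded number of blowups either lowers the multiplicity or produces a resonant or a simple point. Assembling these facts yields a strictly decreasing, well-founded invariant, so after finitely many blowups $\pi\colon Z\to S$ the strict transform of $\mathcal{G}$ on $Z$ has only simple singularities, i.e. no ordinary singularity, which is (1).

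I expect the real obstacle to be the termination argument in (1) for multiplicity-one non-simple singularities — showing that the refined invariant built from the eigenvalue ratio in the resonant case and from the Newton polygon in the nilpotent case genuinely decreases along the admissible blowups; this is the technical heart of Seidenberg's theorem. By contrast, assertion (2) and the multiplicity bookkeeping behind the first ingredient of (1) are routine once the charts are written out. Since the statement is used here only as a black box, in the end I would simply invoke \cite{Seiden,Bru} together with \cite[Theorem 1 and Proposition 1]{fergal1} rather than reproduce the termination proof in full.
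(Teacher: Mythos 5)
The paper offers no proof of this theorem at all: it is stated explicitly as a summary of well-known facts and is delegated entirely to \cite{Seiden,Bru} and \cite[Theorem 1 and Proposition 1]{fergal1}, so your decision to ultimately invoke those references is exactly what the authors do. What you add on top of that is correct: your chart computation for (2) is the standard verification and checks out --- the pullback coefficients $u^m d(1,v)+O(u^{m+1})$ and $u^{m+1}b_m(1,v)+O(u^{m+2})$ are right, the observation that $d(1,v)\equiv 0$ iff $d\equiv 0$ (by homogeneity) and that $d\equiv 0$ forces $b_m\not\equiv 0$ closes the two cases, and the invariance criterion ``$u$ divides the $dv$-coefficient of $\tilde\omega$'' is the correct one; the second chart remark handles the point of $E$ at infinity. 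Your outline of (1) is the standard Seidenberg scheme (multiplicity as primary invariant, eigenvalue-ratio descent in the resonant multiplicity-one case, Newton-polygon control in the nilpotent case), and you correctly identify the termination argument for multiplicity-one non-simple points as the genuinely technical step --- which is precisely why both you and the paper are right to leave it to the cited sources rather than reprove it.
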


Let $p$ be a point in $S$. The points in the exceptional divisor $E_{p}$ obtained after blowing up at $p$ are named points in the \emph{first infinitesimal neighbourhood} of $p$. Assuming that we blow up at some points in $E_p$, the points in the new created exceptional divisors are in the \emph{second infinitesimal neighbourhood} of $p$. Inductively one defines the \emph{$k$th infinitesimal neighbourhood} of $p$, $k\geq 1$. A point $q$ is \emph{infinitely near} $p$ if either $q=p$ or it belongs to some $k$th infinitesimal neighbourhood of $p$. Finally, a point $q$ is \emph{infinitely near} $S$ if it is infinitely near a point in $S$.

In addition, given two infinitely near $S$ points $p$ and $q$, $q$ \emph{is proximate to} $p$  if $q$ belongs to $E_p$ or to any of its strict transforms. When $q$ is proximate to $p$ and $q\notin E_p$, $q$ is named \emph{satellite} and, otherwise, it is called \emph{free}.

Let $\mathcal{G}$ be a foliation and $\pi$ a map as in Theorem \ref{jmaa}. The set $\mathcal{C}_{\mathcal G}=\{p_1,\ldots,p_n\}$ of blowup centers of $\pi$ is called the \emph{singular configuration} of $\mathcal{G}$, and its elements \emph{infinitely near ordinary singularities} of $\mathcal{G}$. Notice that $\mathcal{C}_{\mathcal G}$ is formed by the ordinary singularities of $\mathcal{G}$ and those of the successive strict transforms of $\mathcal{G}$ by the sequence of blowups $\pi$. An \emph{infinitely near ordinary singularity} $p_i \in \mathcal{C}_{\mathcal{G}}$ is named an (infinitely near) {\it dicritical singularity} if there is a point $p_j \in \mathcal{C}_{\mathcal G}$ which is infinitely near $p_i$ and such that $p_j$ is a terminal dicritical singularity of the strict transform of $\mathcal{G}$ on the surface containing $p_j$. 

\subsection{Extending a planar vector field to a foliation on a Hirzebruch surface}

In this section we describe how any complex planar vector field can be extended to a singular algebraic foliation (with isolated singularities) on any Hirzebruch surface $\mathbb{F}_{\delta}$. For this purpose, we consider a complex planar vector field  $X$ as in (\ref{planar})  and the following algorithm, whose input is a nonnegative integer $\delta$ and a differential $1$-form $A(x,y)dx + B(x,y)dy$ (with $A(x,y)$ and $B(x,y)$ in $\mathbb{C}[x,y]$ and coprime) defining $X$, and whose ouput are four bigraded homogeneous polynomials $A_{\delta,0}, A_{\delta,1}, B_{\delta,0}, B_{\delta,1}\in \mathbb{C}[X_0,X_1,Y_0,Y_1]$ of suitable bidegrees.

\begin{algorithm}\label{algoritmo}\mbox{}

\textbf{Input:} A pair $(\delta,\omega)$, where $\delta\in \mathbb{Z}_{\geq 0}$ and $\omega=A(x,y)dx+ B(x,y)dy$ ($A(x,y), B(x,y) \in \mathbb{C}[x,y]$ coprime).

\textbf{Output:} $A_{\delta,0}, A_{\delta,1}, B_{\delta,0}, B_{\delta,1} \in\C[X_0,X1,Y_0,Y_1]$.\\

\begin{enumerate} [label=(\arabic*)]

\item Write the rational functions $A\left(\frac{X_1}{X_0}, \frac{X_0^{\delta} Y_1}{Y_0} \right)$ and $B\left(\frac{X_1}{X_0}, \frac{X_0^{\delta} Y_1}{Y_0} \right)$ as reduced rational fractions $\frac{A_{\delta, 1}}{X_0^{\alpha_1} Y_0^{\alpha_2}}$ and $\frac{B_{\delta, 1}}{X_0^{\beta_1} Y_0^{\beta_2}}$, respectively, where $(\alpha_1,\alpha_2),(\beta_1,\beta_2)\in \mathbb{Z}_{\geq 0}\times \mathbb{Z}_{\geq 0}$ and $A_{\delta, 1}$ and $B_{\delta, 1}$ are bigraded homogeneous polynomials in $\C[X_0,X_1,Y_0,Y_1]$ of respective bidegrees $(\alpha_1,\alpha_2)$ and $(\beta_1,\beta_2)$.

\item Let $m_1:=\alpha_1-\beta_1+1+\delta$. If $m_1>0$, then $B_{\delta, 1}:=X_0^{m_1}B_{\delta, 1}$; otherwise, $A_{\delta, 1}:=X_0^{-m_1}A_{\delta, 1}$.

\item Let $m_2:=\alpha_2-\beta_2-1$. If $m_2>0$, then $B_{\delta, 1}:=Y_0^{m_2}B_{\delta, 1}$; otherwise, $A_{\delta, 1}:=Y_0^{-m_2}A_{\delta, 1}$.

\item Let $b:=0$ if $Y_0$ divides $B_{\delta, 1}$, and $b:=1$ otherwise. Set $B_{\delta, 1}:= Y_0^b B_{\delta, 1}$ and $A_{\delta, 1}:=Y_0^{b}A_{\delta, 1}$.

\item Let $a:=0$ if $X_0$ divides $\delta Y_1 B_{\delta, 1}-X_1 A_{\delta, 1}$ and $a:=1$ otherwise. Set $A_{\delta, 1}:=X_0^a A_{\delta, 1}$ and $B_{\delta, 1}:=X_0^a B_{\delta, 1}$.

\item Set $A_{\delta, 0}:=\frac{\delta Y_1 B_{\delta, 1}-X_1 A_{\delta, 1}}{X_0}$ and $B_{\delta, 0}:=\frac{-Y_1 B_{\delta, 1}}{Y_0}$.
\end{enumerate}

\end{algorithm}

\begin{lemma}\label{lem}
Fix $\delta\in \mathbb{Z}_{\geq 0}$. Let $\omega_X=A(x,y)dx+B(x,y)dy$ be a differential $1$-form defining a planar vector field $X$, and let $A_{\delta, 0}, A_{\delta, 1}, B_{\delta, 0}$ and $B_{\delta, 1}$ be the polynomials of $\mathbb{C}[X_0,X_1,Y_0,Y_1]$ obtained as the output of Algorithm \ref{algoritmo} from the input given by the pair $(\delta,\omega_X)$. Then $A_{\delta, 0},A_{\delta, 1},B_{\delta, 0}$ and $B_{\delta, 1}$ are bigraded homogeneous polynomials with respective bidegrees $(d_1-\delta+1,d_2+2)$, $(d_1-\delta+1,d_2+2)$, $(d_1-\delta+2,d_2+1)$ and $(d_1+2,d_2+1)$ for some integers $d_1,d_2$. Moreover they satisfy the equalities
\begin{equation}\label{equalities}
X_0A_{\delta, 0}+X_1A_{\delta, 1}-\delta Y_1B_{\delta, 1}=0\;\;\mbox{ and }\;\; Y_0B_{\delta, 0}+Y_1B_{\delta, 1}=0
\end{equation}
and have no nonconstant common factor.

\end{lemma}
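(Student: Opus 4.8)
The plan is to treat the three assertions separately — the bidegrees, the two Euler‑type equalities in \eqref{equalities}, and the absence of a nonconstant common factor — the last being the only one that needs genuine work.

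\emph{Bidegrees.} First I would record that the substitution $x\mapsto X_1/X_0$, $y\mapsto X_0^{\delta}Y_1/Y_0$ has bidegree $(0,0)$ for the grading $\deg X_0=\deg X_1=(1,0)$, $\deg Y_0=(0,1)$, $\deg Y_1=(-\delta,1)$, so that $A(X_1/X_0,X_0^{\delta}Y_1/Y_0)$ and $B(X_1/X_0,X_0^{\delta}Y_1/Y_0)$ are quotients of bigraded homogeneous polynomials of equal bidegree; hence the numerators produced in Step (1) are bigraded homogeneous of bidegrees $(\alpha_1,\alpha_2)$ and $(\beta_1,\beta_2)$. Then I would propagate bidegrees through Steps (2)--(6): Step (2) forces the first components of $\deg B_{\delta,1}$ and $\deg A_{\delta,1}$ to differ by exactly $\delta+1$, Step (3) forces their second components to differ by exactly $1$, and Steps (4)--(5) multiply both polynomials by the common monomial $X_0^{a}Y_0^{b}$; writing the resulting $\deg B_{\delta,1}$ as $(d_1+2,d_2+1)$ then defines $d_1,d_2$ and gives $\deg A_{\delta,1}=(d_1-\delta+1,d_2+2)$. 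These are precisely the bidegrees that make $\delta Y_1B_{\delta,1}-X_1A_{\delta,1}$ and $Y_1B_{\delta,1}$ bigraded homogeneous of bidegree $(d_1-\delta+2,d_2+2)$, so dividing by $X_0$ and by $Y_0$ in Step (6) yields $\deg A_{\delta,0}=(d_1-\delta+1,d_2+2)$ and $\deg B_{\delta,0}=(d_1-\delta+2,d_2+1)$, as claimed.

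\emph{Equalities.} These are immediate, since Step (6) sets $X_0A_{\delta,0}=\delta Y_1B_{\delta,1}-X_1A_{\delta,1}$ and $Y_0B_{\delta,0}=-Y_1B_{\delta,1}$, which are \eqref{equalities} after rearranging; one only has to note that the two divisions in Step (6) are exact, and this holds by construction: $X_0\mid\delta Y_1B_{\delta,1}-X_1A_{\delta,1}$ by the choice of $a$ in Step (5), while $Y_0\mid Y_1B_{\delta,1}$ because $Y_0\mid B_{\delta,1}$ after Step (4) and $\gcd(Y_0,Y_1)=1$. For the coprimality I would first assume $A,B\neq 0$ (otherwise one of them is a constant and the claim is trivial) and, using the explicit form of the Step (1) numerators $\tilde A:=A_{\delta,1}$, $\tilde B:=B_{\delta,1}$ together with $\gcd(A,B)=1$, establish three facts: (i) neither $\tilde A$ nor $\tilde B$ is divisible by $Y_0$ (a top‑$y$‑degree monomial, or the absence of $y$, leaves a $Y_0$‑free monomial in the numerator); (ii) $X_0$ divides at most one of $\tilde A,\tilde B$ (if it divided both, then $y\mid\gcd(A,B)$ when $\delta\geq 1$, whereas for $\delta=0$ it divides neither); and (iii) writing $\tilde A,\tilde B$ as monomials in $X_0,Y_0$ times factors $\hat A,\hat B$ coprime to $X_0Y_0$, one has $\gcd(\hat A,\hat B)=1$, reflecting that the numerator map $\C[x,y]\to\C[X_0,X_1,Y_0,Y_1]$ induced by the above substitution sends non‑associate irreducibles to non‑associate irreducibles prime to $X_0Y_0$. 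Since Steps (2)--(5) only multiply $A_{\delta,1},B_{\delta,1}$ by monomials in $X_0,Y_0$, the factors $\hat A,\hat B$ remain the parts of the final $A_{\delta,1},B_{\delta,1}$ prime to $X_0Y_0$, so by (iii) any irreducible $P$ dividing all four outputs must be $X_0$ or $Y_0$ up to a scalar.

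It then remains to exclude $P=Y_0$ and $P=X_0$, which I would do by tracking the $Y_0$‑adic and $X_0$‑adic orders through the algorithm. For $Y_0$: by (i) and the structure of Steps (2)--(3), after Step (3) one of $A_{\delta,1},B_{\delta,1}$ has $Y_0$‑order $0$; if it is $B_{\delta,1}$ then $b=1$, the final $B_{\delta,1}$ has $Y_0$‑order exactly $1$, and $B_{\delta,0}=-Y_1B_{\delta,1}/Y_0$ has $Y_0$‑order $0$; if it is $A_{\delta,1}$ then $b=0$ and $A_{\delta,1}$ keeps $Y_0$‑order $0$ — either way $Y_0$ is not a common factor. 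For $X_0$: if $a=1$ in Step (5), the choice of $a$ gives $X_0\nmid A_{\delta,0}$; if $a=0$, one combines (ii), the exact exponents $m_1,m_2$ fixed in Steps (2)--(3), and $\gcd(X_0,X_1)=\gcd(X_0,Y_1)=1$ to show that the $X_0$‑order of $A_{\delta,0}$, or else that of $B_{\delta,1}$, vanishes. I expect the case $P=X_0$ with $a=0$ to be the main obstacle, since it is the only point where the balancing arranged in Steps (2)--(3), the possible divisibility of the Step (1) numerators by powers of $X_0$, and the cancellation inside $\delta Y_1B_{\delta,1}-X_1A_{\delta,1}$ must all be controlled simultaneously.
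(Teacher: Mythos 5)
Your treatment of the bidegrees (via the observation that the substitution $x\mapsto X_1/X_0$, $y\mapsto X_0^{\delta}Y_1/Y_0$ preserves bidegree and that Steps (2)--(3) pin the two components of $\deg B_{\delta,1}-\deg A_{\delta,1}$ at $\delta+1$ and $-1$) and of the Euler-type equalities is correct, and is in fact tidier than the paper's case-by-case bookkeeping. The reduction of a putative common irreducible factor to $X_0$ or $Y_0$ via your facts (i)--(iii), and the exclusion of $Y_0$ by tracking $Y_0$-orders through Steps (3)--(4), also follow the paper's line and are sound. The gap is exactly where you place it: you never actually carry out the exclusion of $X_0$ in the case $a=0$ of Step (5); you list the ingredients and then say you \emph{expect} this to be the main obstacle. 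Since that is the only non-routine part of the coprimality claim, the lemma is not proved as it stands.

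Moreover, the obstacle you sense is real and cannot be overcome by a cleverer combination of (ii) with the exponents $m_1,m_2$: under the literal reading of Step (1) (denominator $X_0^{\alpha_1}Y_0^{\alpha_2}$ with $\alpha_1,\alpha_2\geq 0$ and numerator of bidegree $(\alpha_1,\alpha_2)$), a Step (1) numerator is divisible by $X_0$ exactly when every monomial $x^iy^j$ of the corresponding coefficient satisfies $i<\delta j$, and Step (2) can then make the \emph{other} numerator divisible by $X_0$ as well, after which the cancellation inside $\delta Y_1B_{\delta,1}-X_1A_{\delta,1}$ need not rescue $A_{\delta,0}$. Concretely, for $\omega=y\,dx+x^2\,dy$ and $\delta=3$ the algorithm returns $A_{3,0}=X_0X_1Y_0Y_1(3X_1-X_0)$, $A_{3,1}=X_0^3Y_0Y_1$, $B_{3,0}=-X_0^2X_1^2Y_1$ and $B_{3,1}=X_0^2X_1^2Y_0$, which satisfy \eqref{equalities} but share the factor $X_0$. (The paper's own proof slips at the same spot: its assertions that the Step (1) numerators are prime to $X_0Y_0$ and that after Steps (2)--(3) at most one of $A_{\delta,1},B_{\delta,1}$ is divisible by $X_0$ both fail here.) So before the $a=0$ case can be settled by the order count you outline, the construction itself has to be normalized, e.g.\ by taking in Step (1) the numerators coprime to $X_0Y_0$ (allowing $\alpha_1<0$) or by appending a final step that divides the four outputs by their common power of $X_0$; with such a normalization your strategy for the remaining case goes through.
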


\begin{proof}

Notice that the polynomials $A_{\delta, 1}$ and $B_{\delta, 1}$ obtained in Step (1) of Algorithm \ref{algoritmo} are coprime and have respective bidegrees $(\alpha_1,\alpha_2)$ and $(\beta_1,\beta_2)$. A straightforward but tedious study of the different possibilities that may appear in Algorithm \ref{algoritmo} shows the existence of integers $d_1,d_2$ such that the bidegree of $A_{\delta, 1}$ is $(d_1-\delta+1,d_2+2)$ and the bidegree of $B_{\delta, 1}$ is $(d_1+2,d_2+1)$. As an example: if $m_1,m_2>0$ (in Steps (2) and (3)), $b=0$ in Step (4) and $a=1$ in Step (5); then $A_{\delta, 1}$ and $B_{\delta, 1}$ have respective bidegrees $(\alpha_1+1,\alpha_2)$ and $(\alpha_1+2+\delta,\alpha_2-1)$. Hence $d_1=\alpha_1+\delta$ and $d_2=\alpha_2-2$ in this case.

The polynomials $A_{\delta, 1}$ and $B_{\delta, 1}$ obtained after applying the steps from (1) to (5) satisfy that $X_0$ (respectively, $Y_0$) divides $\delta Y_1 B_{\delta, 1}-X_1A_{\delta, 1}$ (respectively, $B_{\delta, 1}$). Therefore the rational functions $A_{\delta, 0}$ and $B_{\delta, 0}$ defined in Step (6) are polynomials and their bidegrees coincide with those given in the statement. In addition, Equalities (\ref{equalities}) hold trivially.

It is easily derived from the algorithm that the only two possible common factors of the output polynomials are $X_0$ and $Y_0$. Let us see that none of them can be such a common factor. The polynomials $A_{\delta,1}$ and $B_{\delta,1}$ obtained in Step (1) do not share factors with $X_0Y_0$. After steps (2) and (3), at most one of them ($A_{\delta,1}$ and $B_{\delta,1}$) has $X_0$ (respectively, $Y_0$) as a factor. On the one hand, in Step (4) we ensure that either $Y_0$ does not divide $A_{\delta,1}$, or $Y_0$ divides $B_{\delta,1}$ but $Y_0^2$ does not (what implies that $Y_0$ does not divide $B_{\delta,0}$ after Step (6)). On the other hand, in Step (5) we force $X_0$ to divide $\delta Y_1 B_{\delta, 1}-X_1A_{\delta, 1}$ (but $X_0^2$ does not); then, after  Step (6), $X_0$ does not divide $A_{\delta,0}$.
\end{proof}

Consider a nonnegative integer $\delta$ and identify the affine plane $\mathbb{C}^2$ with the open subset $U_{00}$ of $\mathbb{F}_{\delta}$. Then, as a consequence of Section \ref{sfh} and Lemma \ref{lem}, we deduce the following result.

\begin{proposition}
\label{la33}
Let $\delta$ be a nonnegative integer and  $\omega_X=A(x,y)dx+B(x,y)dy$  a differential $1$-form defining a complex planar polynomial vector field $X$. Let $$A_{\delta, 0},A_{\delta, 1},B_{\delta, 0},B_{\delta, 1}\in \mathbb{C}[X_0,X_1,Y_0,Y_1]$$ be the output of Algorithm \ref{algoritmo} when its input is the pair $(\delta, \omega_X)$. Then the bigraded homogeneous affine differential $1$-form
$$\Omega_{\delta}:=A_{\delta, 0}dX_0+A_{\delta, 1}dX_1+B_{\delta, 0}dY_0+B_{\delta, 1}dY_1$$
defines a singular algebraic foliation on the Hirzebruch surface $\mathbb{F}_{\delta}$, with isolated singularities, whose restriction to the open set $U_{00}$ gives the $1$-form in two variables that determines the vector field $X$.

\end{proposition}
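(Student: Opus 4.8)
The approach is to observe that, thanks to Lemma~\ref{lem}, the $1$-form $\Omega_\delta$ already has the precise shape of the affine $1$-forms that, according to Subsection~\ref{sfh} (following \cite[Section~3]{GalMonOliv}), define singular foliations with isolated singularities on $\mathbb{F}_\delta$; and then to compute the restriction of $\Omega_\delta$ to $U_{00}$ by a direct substitution. More concretely, Lemma~\ref{lem} guarantees that $A_{\delta,0},A_{\delta,1},B_{\delta,0},B_{\delta,1}$ are bigraded homogeneous of bidegrees $(d_1-\delta+1,d_2+2)$, $(d_1-\delta+1,d_2+2)$, $(d_1-\delta+2,d_2+1)$ and $(d_1+2,d_2+1)$ for some integers $d_1,d_2$, that they have no nonconstant common factor, and that the two equalities~(\ref{equalities}) hold. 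But those equalities are exactly the Euler-type conditions $\Omega_\delta(R_1)=0$ and $\Omega_\delta(R_2)=0$, and these bidegrees are exactly the ones prescribed in Subsection~\ref{sfh} for the coefficients of an affine $1$-form attached to a foliation on $\mathbb{F}_\delta$. Hence $\Omega_\delta$ determines a singular algebraic foliation $\mathcal{F}_X^\delta$ on $\mathbb{F}_\delta$, the absence of a common nonconstant factor being precisely what makes its singularities isolated. (If one wishes, one further records that the $d_1,d_2$ supplied by Lemma~\ref{lem} lie in the admissible range---$d_2\geq 0$, and $d_1\geq 0$ if $\delta=0$ or $d_1\geq -1$ otherwise---which can be read off from the case analysis in the proof of Lemma~\ref{lem}; this is in any case immaterial to the mere existence of the associated foliation.)

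It then remains to identify the restriction of $\Omega_\delta$ to $U_{00}$. I would identify $U_{00}$ with $\mathbb{C}^2$ through the section $(x,y)\mapsto\varphi(1,x,1,y)$, so that the local $1$-form cutting out $\mathcal{F}_X^\delta$ on $U_{00}$ is the pullback of $\Omega_\delta$ along this section; since $X_0$ and $Y_0$ are set equal to $1$ (so $dX_0=dY_0=0$), this pullback is $A_{\delta,1}(1,x,1,y)\,dx+B_{\delta,1}(1,x,1,y)\,dy$. Now one traces the algorithm through this substitution: after Step~(1) one has $A_{\delta,1}(1,X_1,1,Y_1)=A(X_1,Y_1)$ and $B_{\delta,1}(1,X_1,1,Y_1)=B(X_1,Y_1)$, because putting $X_0=Y_0=1$ turns the reduced fractions $A_{\delta,1}/(X_0^{\alpha_1}Y_0^{\alpha_2})$ and $B_{\delta,1}/(X_0^{\beta_1}Y_0^{\beta_2})$ back into $A(X_1,Y_1)$ and $B(X_1,Y_1)$; and Steps~(2)--(5) only multiply $A_{\delta,1}$ and $B_{\delta,1}$ by monomials in $X_0$ and $Y_0$, which become $1$ under the same substitution. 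Therefore the restriction of $\Omega_\delta$ to $U_{00}$ is exactly $\omega_X=A(x,y)\,dx+B(x,y)\,dy$, the $1$-form in the two variables $x,y$ attached to $X$.

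I do not anticipate a genuine obstacle here. The technically delicate part---the bidegree bookkeeping of Algorithm~\ref{algoritmo} and the verification that its output has no nonconstant common factor---is precisely the content of Lemma~\ref{lem}, which may be assumed, and once it is in hand the proposition amounts to matching $\Omega_\delta$ against the template of Subsection~\ref{sfh} and performing the substitution above. The only points demanding a little care are to invoke the correspondence of Subsection~\ref{sfh} in the direction ``admissible affine $1$-form $\Rightarrow$ foliation with isolated singularities'', and to notice that none of Steps~(2)--(5) of the algorithm disturbs the chart $U_{00}$, since every modification there is multiplication by a monomial in $X_0$ and $Y_0$.
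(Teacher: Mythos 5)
Your proposal is correct and follows essentially the same route as the paper, which states the proposition as a direct consequence of Lemma \ref{lem} (bidegrees, Euler-type conditions, no common factor) together with the description of foliations on $\mathbb{F}_\delta$ in Subsection \ref{sfh}. Your explicit verification that the restriction to $U_{00}$ recovers $\omega_X$ — via the section $X_0=Y_0=1$ and the observation that Steps (2)--(5) only multiply by monomials in $X_0,Y_0$ — correctly fills in the detail the paper leaves implicit.
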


The foliation obtained from the pair $(\delta, \omega_X)$ by Proposition \ref{la33} is called the \emph{extension of the vector field $X$} to the Hirzebruch surface $\mathbb{F}_{\delta}$ and it is denoted by ${\mathcal F}_X^{\delta}$.\\

If $f=\frac{f_1(x,y)}{f_2(x,y)}$ is a reduced rational function and $\delta\in \mathbb{Z}_{\geq 0}$, then there exist two coprime bigraded homogeneous polynomials $F_1,F_2\in \mathbb{C}[X_0,X_1,Y_0,Y_1]$ of the same bidegree such that the following equality of rational functions holds:
$$f(X_1/X_0, X_0^\delta Y_1/Y_0)=\frac{F_1}{F_2}.$$
By \cite[Proposition 1.6]{Scardua}, a planar vector field $X$ has $f$ as a rational first integral if and only if the function $F_1/F_2$ is a rational first integral of the foliation ${\mathcal F}_X^{\delta}$.

\section{Necessary conditions for algebraic integrability}
This section is devoted to study the behaviour of algebraically integrable complex planar polynomial vector fields $X$ through their extensions to foliations ${\mathcal F}_X^{\delta}$ on Hirzebruch surfaces. We show in Theorem \ref{gordo} that the points with coordinates $(0,1;0,1)$ (respectively, $(0,1;1,0)$) in each surface $\fd$ are dicritical singularities of ${\mathcal F}_X^{\delta}$ whenever $\delta > \delta_1$ (respectively, $\delta < \delta_1$) for a fixed nonnegative integer $\delta_1$ which is the minimum nonnegative integer such that $(0,1;1,0)$ is not a dicritical singularity of ${\mathcal F}_X^{\delta}$. This result can be reformulated in terms of planar vector fields depending on a nonnegative integer parameter which gives rise to a new technique for discarding the existence of a rational first integral of a vector field (see Corollary \ref{ccc}). We start with a lemma which we will use in the proof of the forthcoming Theorem \ref{gordo}.

\begin{lemma}\label{llll}
Let $X$ be an algebraically integrable complex planar vector field. Let $f=\frac{f_1(x,y)}{f_2(x,y)}$ be a primitive rational first integral of $X$ and $g(x,y)=\alpha f_1(x,y)+\beta f_2(x,y)\in \mathbb{C}(\alpha,\beta)[x,y]$ the associated generic algebraic invariant curve of $X$. Then $X\neq c \frac{\partial}{\partial y}$ for all $c\in \mathbb{C}\setminus \{0\}$ if and only if $g(x,y)\not\in \mathbb{C}(\alpha,\beta)[x]$.
\end{lemma}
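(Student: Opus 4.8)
The plan is to prove both implications directly, using only the definition of a rational first integral and the fact that in a reduced fraction $f=f_1/f_2$ the polynomials $f_1$ and $f_2$ are coprime in $\mathbb{C}[x,y]$, together with uniqueness of the reduced form up to a nonzero scalar.

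First I would settle the implication ``$X=c\,\partial/\partial y$ $\Rightarrow$ $g\in\mathbb{C}(\alpha,\beta)[x]$'', which is the contrapositive of one of the two directions. If $X=c\,\partial/\partial y$ with $c\in\mathbb{C}\setminus\{0\}$, then any rational first integral $h$ of $X$ satisfies $c\,\partial h/\partial y=0$, hence $h\in\mathbb{C}(x)$; in particular $f\in\mathbb{C}(x)$. Now a quotient of two coprime polynomials of $\mathbb{C}[x]$ is still reduced when regarded in $\mathbb{C}(x,y)=\mathbb{C}(x)(y)$ (a divisor in $\mathbb{C}[x,y]$ of a polynomial of $\mathbb{C}[x]$ is again in $\mathbb{C}[x]$), so by uniqueness of the reduced form one gets $f_1,f_2\in\mathbb{C}[x]$ and therefore $g=\alpha f_1+\beta f_2\in\mathbb{C}(\alpha,\beta)[x]$.

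For the converse I would argue by contradiction. Assume $X\neq c\,\partial/\partial y$ for every $c\in\mathbb{C}\setminus\{0\}$ but $g\in\mathbb{C}(\alpha,\beta)[x]$. Since $\alpha$ and $\beta$ are algebraically independent, for each monomial $x^iy^j$ with $j\geq 1$ the coefficient of $x^iy^j$ in $g=\alpha f_1+\beta f_2$ equals $\alpha a_{ij}+\beta b_{ij}$, with $a_{ij},b_{ij}$ the corresponding coefficients in $f_1,f_2$; this being $0$ in $\mathbb{C}(\alpha,\beta)$ forces $a_{ij}=b_{ij}=0$. Hence $f_1,f_2\in\mathbb{C}[x]$ and $f\in\mathbb{C}(x)\setminus\mathbb{C}$, so $\partial f/\partial y=0$ and $0=X(f)=A(x,y)\,\partial f/\partial x$. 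As $f\notin\mathbb{C}$, the rational function $\partial f/\partial x$ is nonzero, so $A\equiv 0$; but then coprimality of $A$ and $B$ forces $B$ to be a nonzero constant, i.e. $X=c\,\partial/\partial y$ for some $c\in\mathbb{C}\setminus\{0\}$, contradicting the hypothesis.

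I do not expect a serious obstacle: the argument is elementary. The only points that deserve a little care are the two bits of commutative-algebra bookkeeping, namely that a reduced quotient of elements of $\mathbb{C}[x]$ remains reduced in $\mathbb{C}[x,y]$, and that $\gcd(A,B)=1$ together with $A\equiv 0$ forces $B\in\mathbb{C}\setminus\{0\}$.
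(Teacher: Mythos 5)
Your proof is correct, and its overall strategy is the same as the paper's: both arguments hinge on the chain of equivalences $X=c\,\partial/\partial y \Leftrightarrow f\in\mathbb{C}(x) \Leftrightarrow f_1,f_2\in\mathbb{C}[x] \Leftrightarrow g\in\mathbb{C}(\alpha,\beta)[x]$. The details differ, though, in a way worth noting. The paper's (very terse) proof passes through the statement that $x$ is a first integral and invokes primitivity to conclude that $f_1$ and $f_2$ are \emph{degree-one} polynomials in $x$, and it justifies the final equivalence by appealing to the irreducibility of the generic member of the pencil. You avoid both of these ingredients: in one direction you use only reducedness of $f_1/f_2$ (via uniqueness of the reduced form and the fact that divisors in $\mathbb{C}[x,y]$ of elements of $\mathbb{C}[x]$ stay in $\mathbb{C}[x]$), and in the other you compare coefficients of $\alpha f_1+\beta f_2$ using the algebraic independence of $\alpha,\beta$ and then exploit the identity $X(f)=A\,\partial f/\partial x$ together with coprimality of $A$ and $B$. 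The result is a more elementary and more fully spelled-out argument that needs neither the noncompositeness of $f$ nor the irreducibility of generic pencil members; the paper's version buys a slightly stronger intermediate conclusion ($\deg f_1=\deg f_2=1$) that the lemma itself does not require.
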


\begin{proof}

$X=c \frac{\partial}{\partial y}$ for some $c\in \mathbb{C}\setminus \{0\}$ if and only if the foliation on $\mathbb{C}^2$ defined by $X$ is determined by the $1$-form $\omega_X=dx$. This means that the function $x$ is a first integral of this foliation, that is, $f_1(x,y)$ and $f_2(x,y)$ are polynomials in $\mathbb{C}[x]$ of degree 1.  This is equivalent to say that $g(x,y)\in \mathbb{C}(\alpha,\beta)[x]$ because the polynomial of $\mathbb{C}[x,y]$ obtained after replacing, in $g(x,y)$, $\alpha$ and $\beta$ by general complex numbers, must be irreducible.

\end{proof}

\begin{theorem}\label{gordo}
Let $X$ be an algebraically integrable complex planar polynomial vector field such that $X\neq c \frac{\partial}{\partial y}$ for all $c\in \mathbb{C}\setminus \{0\}$. For each $\delta\in \mathbb{Z}_{\geq 0}$, consider the foliation ${\mathcal F}_X^\delta$ given by the extension of $X$ to the Hirzebruch surface $\mathbb{F}_{\delta}$. Then, there exists a nonnegative integer $\delta_1$ satisfying the following conditions:
\begin{itemize}
\item[(i)]  For all integers $\delta$ such that $\delta> \delta_1$, the point $(0,1;0,1)\in\mathbb{F}_{\delta}$ is the unique dicritical singularity of ${\mathcal F}_X^\delta$ belonging to the curve $X_0=0$.

\item[(ii)] For all nonnegative integer $\delta$ such that $\delta < \delta_1$, the point $(0,1;1,0)\in \mathbb{F}_{\delta}$ is a dicritical singularity of ${\mathcal F}_X^\delta$.

\item[(iii)] The point $(0,1;1,0)\in \mathbb{F}_{\delta_1}$ is not a dicritical singularity of ${\mathcal F}_X^{\delta_1}$.

\end{itemize}

\end{theorem}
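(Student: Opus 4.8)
The plan is to analyze the local behaviour of the extended foliation $\mathcal F_X^\delta$ at the two points $p_\delta:=(0,1;0,1)$ and $q_\delta:=(0,1;1,0)$ on the curve $X_0=0$, and to show that dicriticity at these points is governed by a single integer $\delta_1$. First I would pass to explicit local coordinates. On the chart $U_{10}$ (where $X_1\neq 0$, $Y_0\neq 0$) we have coordinates $x_{10}=X_0/X_1$, $y_{10}=X_1^\delta Y_1/Y_0$, so $q_\delta$ is the origin of $U_{10}$; on the chart $U_{11}$ (where $X_1\neq 0$, $Y_1\neq 0$) the point $p_\delta$ is the origin. I would write down the local $1$-form $\omega^{(\delta)}$ obtained by restricting $\Omega_\delta$ to each of these charts (using the Euler-type relations (\ref{equalities}) to eliminate $A_{\delta,0}$ and $B_{\delta,0}$ and reduce everything to $A_{\delta,1}$ and $B_{\delta,1}$), and keep careful track of how the exponents introduced in Steps (2)–(5) of Algorithm \ref{algoritmo} depend on $\delta$. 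The key point is that the substitution $x=X_1/X_0$, $y=X_0^\delta Y_1/Y_0$ shifts powers of $X_0$ by $\delta$ per unit of $y$-degree, so increasing $\delta$ monotonically changes the order of vanishing of the local $1$-form along $\{X_0=0\}$ at these points.

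Next I would characterise when $q_\delta$ (resp. $p_\delta$) is a terminal dicritical singularity, or more generally an infinitely near dicritical singularity, in terms of the first integral. Since $X$ is algebraically integrable, by Proposition~\ref{la33} and the discussion after it the foliation $\mathcal F_X^\delta$ has the rational first integral $F_1/F_2$ obtained from a primitive $f=f_1/f_2$; the generic invariant curve $g=\alpha f_1+\beta f_2$ then becomes, up to clearing denominators, a bigraded homogeneous polynomial $G_\delta=\alpha F_1+\beta F_2$. The pencil $\{G_\delta=0\}$ cuts out the invariant curves of $\mathcal F_X^\delta$, and the dicritical singularities infinitely near a point $p$ are exactly the base points of this pencil infinitely near $p$ at which members of the pencil become transverse to the exceptional divisor (equivalently, by Theorem~\ref{jmaa}(2), the exceptional divisor fails to be invariant). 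Thus $q_\delta$ is an (infinitely near) dicritical singularity of $\mathcal F_X^\delta$ if and only if $X_0$ divides $G_\delta$, i.e. if and only if the homogenization/substitution that produced $F_1,F_2$ forced a common factor $X_0$; and this divisibility is controlled by the $y$-degrees of the monomials appearing in $g$. Concretely, writing $N$ for the maximum $y$-degree of a monomial of $g$ and $d=\deg g$, the factor of $X_0$ appearing when one writes $g(X_1/X_0,X_0^\delta Y_1/Y_0)$ over a common denominator has exponent that is an increasing (piecewise-linear, slope-$1$ eventually) function of $\delta$; the threshold value is precisely $\delta_1$. Lemma~\ref{llll} guarantees $g\notin\mathbb C(\alpha,\beta)[x]$, so $N\geq 1$ and this threshold is well defined and finite.

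Having isolated $\delta_1$ as this threshold, I would then prove the three assertions. For (iii): at $\delta=\delta_1$ the exponent of $X_0$ dividing $G_{\delta_1}$ is, by definition of the threshold, not large enough to make $\{X_0=0\}$ a dicritical exceptional component at $q_{\delta_1}$; one checks that the local $1$-form at $q_{\delta_1}$ has $\{X_0=0\}$ invariant (the polynomial $d(x,y)=x a_m + y b_m$ is nonzero, or the curve is invariant by direct inspection), so $q_{\delta_1}$ is not dicritical. For (ii): if $\delta<\delta_1$ the power of $X_0$ is strictly smaller, which forces the leading jet at $q_\delta$ to be purely "radial" — $d(x,y)\equiv 0$ — so $q_\delta$ is terminal dicritical, hence dicritical. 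For (i): when $\delta>\delta_1$, the excess power of $X_0$ now moves to the other point; one computes the local $1$-form at $p_\delta=(0,1;0,1)$ on $U_{11}$ and finds $\{X_0=0\}$ becomes transverse there, so $p_\delta$ is dicritical, while a direct check on the whole curve $X_0=0$ (using that $f$ is primitive, so the generic member of the pencil is irreducible and meets $X_0=0$ in a single moving point for $\delta$ large) shows $p_\delta$ is the \emph{only} dicritical singularity of $\mathcal F_X^\delta$ on that curve. The main obstacle I anticipate is the bookkeeping in the first two paragraphs: correctly computing the $X_0$- and $Y_0$-adic valuations produced by the five normalisation steps of Algorithm~\ref{algoritmo} as functions of $\delta$, and translating "$X_0\mid G_\delta$ with a given multiplicity" into the precise statement "$d(x,y)\equiv 0$ in the local jet", since the local multiplicity $m$ of the foliation at $q_\delta$ and the vanishing order of $G_\delta$ along $X_0$ are related but not identical, and one must handle the interaction with the other base points of the pencil carefully to get the uniqueness in (i).
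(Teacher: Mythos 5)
Your overall architecture does match the paper's: pass to the bigraded polynomial $G_\delta$ obtained from the generic invariant curve $g=\alpha f_1+\beta f_2$ by the substitution $x=X_1/X_0$, $y=X_0^\delta Y_1/Y_0$, and read off the dicritical singularities on $X_0=0$ from the base locus of the transformed pencil, with a threshold in $\delta$. However, the execution has a genuine gap at the central step. Your dicriticity criterion --- ``$q_\delta$ is a dicritical singularity if and only if $X_0$ divides $G_\delta$'' --- is wrong: the paper normalizes $G_\delta$ precisely so that it is \emph{not} divisible by $X_0$ or $Y_0$ (and for generic $(\alpha,\beta)$ divisibility of $\alpha F_1+\beta F_2$ by $X_0$ would force $X_0$ to divide both $F_1$ and $F_2$, contradicting their coprimality). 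The correct criterion is pointwise: a point of $X_0=0$ is a dicritical singularity of ${\mathcal F}_X^\delta$ exactly when it is a base point of the pencil, i.e.\ when $G_\delta$ vanishes there. With the wrong criterion, the quantity you propose to track (``the factor of $X_0$ \ldots an increasing function of $\delta$'') is also off: the relevant exponent is $a'=\max\bigl(0,\max_{j>0,\,g_{ij}\neq 0}(i-\delta j)-d_x^0\bigr)$, which is \emph{non-increasing} in $\delta$, and the threshold is not identified. The paper's key missing ingredient in your write-up is the explicit set $\Gamma=\{(i-d_x^0)/j : j>0,\ g_{ij}\neq 0\}\cap\Q_{\geq 0}$ and the value $k=\max\Gamma$, with $\delta_1=\lceil k\rceil$; the trichotomy $\delta<k$, $\delta=k$, $\delta>k$ then corresponds exactly to $a'>0$ (so $g_{d_x^00}$ cannot survive at $(0,1;1,0)$ and that point is a base point), $a'=0$ with $G_\delta(0,1;1,0)=g_{d_x^00}\neq 0$, and $a'=0$ with $G_\delta|_{X_0=0}$ a nonzero multiple of $Y_0^{d_y}$.

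Two further steps would fail as written. For (i), your uniqueness argument (``the generic member meets $X_0=0$ in a single moving point'') is incoherent: a moving intersection point is by definition not a base point, so it would give \emph{no} dicritical singularity on $X_0=0$; the actual argument is that for $\delta>\delta_1$ the restriction $G_\delta|_{X_0=0}$ reduces to $g_{d_x^00}Y_0^{d_y}$ (or $g_{00}Y_0^{d_y}$ when $d_x^0=0$), whose only zero on that fibre is $(0,1;0,1)$, and $d_y>0$ by Lemma~\ref{llll} guarantees this zero exists. For (ii) you claim $\delta<\delta_1$ forces the leading jet at $q_\delta$ to satisfy $d(x,y)\equiv 0$, i.e.\ that $q_\delta$ is \emph{terminal} dicritical; this is stronger than what is true or needed --- the definition of dicritical singularity only requires some infinitely near point to be terminal dicritical, and the base-point argument gives dicriticity directly without computing jets. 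Similarly, in (iii) invariance of $X_0=0$ at $q_{\delta_1}$ does not by itself exclude dicriticity (which concerns exceptional divisors of infinitely near points); what does exclude it is $G_{\delta_1}(0,1;1,0)\neq 0$, so only finitely many leaves pass through that point.
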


\begin{proof}

Let $f=\frac{f_1(x,y)}{f_2(x,y)}$ be a primitive rational first integral of $X$. Then the associated generic algebraic invariant curve of $X$ is $g(x,y)=\alpha f_1(x,y)+\beta f_2(x,y)\in \mathbb{C}(\alpha,\beta)[x,y]$. Let us write
$g(x,y)=\sum g_{ij}x^i y^j$, where the coefficients $g_{ij}$ are homogeneous linear polynomials in $\alpha, \beta$. Let $d_x$ (respectively, $d_y$) be the degree in the variable $x$ (respectively, $y$) of $g(x,y)$, that is, the degree of $g$ when it is regarded as a polynomial in $x$ (respectively, $y$) with coefficients in $\mathbb{C}(\alpha,\beta,y)$ (respectively, $\mathbb{C}(\alpha,\beta,x)$). Denote by $d_x^0$ (respectively, $d_y^0$) the degree of $g(x,0)$ (respectively, $g(y,0)$). Notice that $d_y>0$ by Lemma \ref{llll}.

We can write $g(x,y)$ as the sum of four polynomials $A, B, C$ and $D$ (with variables $x,y$ and coefficients in $\mathbb{C}(\alpha,\beta)$) as showed in the following displayed formula:
\begin{equation}\label{rrr}
g(x,y)=\underbrace{\sum_{i=0}^{d_x^0}g_{i0}x^i}_{=A}+ \underbrace{\sum_{j=1}^{d_y^0}g_{0j}y^j}_{=B}+ \underbrace{\sum_{{\tiny{\begin{array}{l}
1 \leq i\leq d_x^0
\end{array}}}}^{}\sum_{j=1}^{d_y'}g_{ij}x^iy^j}_{=C}+ \underbrace{\sum_{{\tiny{\begin{array}{l}
i>d_x^0
\end{array}}}}^{}\sum_{j=1}^{d_y''}g_{ij}x^iy^j}_{=D}.
\end{equation}

Denote by $\mathrm{Coeff}(h)$ the set of nonzero coefficients $h_{ij}$ of a polynomial $$h(x,y) = \sum h_{ij} x^i y^j \in \mathbb{C}(\alpha,\beta)[x,y].$$ Also, consider the following set of nonnegative rational numbers:
\begin{equation}
\label{gamma}
\Gamma=\left\{\frac{i-d_x^0}{j} \; | \; j>0\text{ and } g_{ij}\in \mathrm{Coeff}(g)\right\}\cap \Q_{\geq 0}.
\end{equation}

Let $\delta$ be an arbitrary nonnegative integer. Consider the Hirzebruch surface $\mathbb{F}_{\delta}$ and identify $\mathbb{C}^2$ with the open set $U_{00}$ of $\mathbb{F}_{\delta}$ as showed in Subsection \ref{Hirzebruch}. Then, replacing in Equation (\ref{rrr}), $x$ by $X_1/X_0$ and $y$ by $X_0^\delta Y_1/Y_0$, and multiplying by suitable powers $X_0^a$ and $Y_0^b$, we obtain a irreducible bigraded homogeneous polynomial $G_{\delta}(X_0,X_1;Y_0,Y_1) \in \mathbb{C}(\alpha,\beta)[X_0,X_1,Y_0, Y_1]$ of bidegree $(a,b)$.
\begin{multline*}
G_{\delta}(X_0,X_1;Y_0,Y_1):=\\X_0^aY_0^b\cdot\left(g_{00}+ \sum_{i=1}^{d_x^0}g_{i0}\frac{X_1^i}{X_0^i}+\sum_{j=1}^{d_y^0}g_{0j}\frac{X_0^{\delta j} Y_1^j}{Y_0^j}+ \sum_{{\tiny{\begin{array}{l}
1 \leq i \leq d_x^0
\end{array}}}}^{}\sum_{j=1}^{d_y'}g_{ij}\frac{X_0^{\delta j - i} X_1^i Y_1^j}{Y_0^j} \right.\\
\left.+\sum_{{\tiny{\begin{array}{l}
i>d_x^0
\end{array}}}}^{}\sum_{j=1}^{d_y''}g_{ij}\frac{X_0^{\delta j - i} X_1^i Y_1^j}{Y_0^j} \right).
\end{multline*}
The polynomial $G_{\delta}$ is not divisible by neither $X_0$ nor $Y_0$, $b=d_y=\max\{d_y^0,d_y',d_y''\}> 0$ and $a=a'+d_x^0$, with  $a'\in\Z_{\geq 0}$. Therefore,
\begin{multline*}
G_{\delta}(X_0,X_1;Y_0,Y_1)=\\X_0^{a'}\cdot\left(g_{00} X_0^{d_x^0}Y_0^{d_y}+ \sum_{i=1}^{d_x^0}g_{i0}X_0^{d_x^0-i}X_1^i Y_0^{d_y}+\sum_{j=1}^{d_y^0}g_{0j}X_0^{\delta j+ d_x^0} Y_0^{d_y-j} Y_1^j\right.\\
\left.+ \sum_{{\tiny{\begin{array}{l}
1 \leq i\leq d_x^0
\end{array}}}}^{}\sum_{j=1}^{d_y'}g_{ij} X_0^{\delta j +d_x^0- i} X_1^i Y_0^{d_y-j} Y_1^j +\sum_{{\tiny{\begin{array}{l}
i>d_x^0
\end{array}}}}^{}\sum_{j=1}^{d_y''}g_{ij} X_0^{\delta j +d_x^0- i} X_1^i Y_0^{d_y-j} Y_1^j \right).
\end{multline*}
Notice that, in the above expression between parentheses, negative exponents may only appear in the last block of summations.\\

{\it Firstly let us assume that $\Gamma=\emptyset$.} This implies that $i< d_x^0$ for all $g_{ij}\in \mathrm{Coeff}(g)$; so $D=0$. Then $a'=0$ and
\begin{multline*}
G_{\delta}(X_0,X_1;Y_0,Y_1)= g_{00} X_0^{d_x^0}Y_0^{d_y}+ \sum_{i=1}^{d_x^0}g_{i0}X_0^{d_x^0-i}X_1^i Y_0^{d_y}+\sum_{j=1}^{d_y^0}g_{0j}X_0^{\delta j+ d_x^0} Y_0^{d_y-j} Y_1^j\\
+ \sum_{{\tiny{\begin{array}{l}
i< d_x^0
\end{array}}}}^{}\sum_{j=1}^{d_y'}g_{ij} X_0^{\delta j +d_x^0- i} X_1^i Y_0^{d_y-j} Y_1^j.
\end{multline*}

Notice that $d_x^0>0$ because otherwise $B=0$ and $C=0$, what implies that $g(x,y)=g_{00}$ (a contradiction because, by Lemma \ref{llll}, $d_y>0$). Therefore $g_{d_x^0 0}\neq 0$. This shows that the point $(0,1;0,1)$ is the unique point belonging to the intersection of the curves on $\mathbb{F}_\delta$ defined by  the equations $X_0=0$ and $G_{\delta}(X_0,X_1;Y_0,Y_1)=0$ or, equivalently, it is the unique dicritical singularity of ${\mathcal F}_{X}^\delta$ belonging to the curve defined by $X_0=0$ (independently of the value of $\delta$). In this case $\delta_1=0$ is the integer satisfying the conditions given in the statement.

{\it Let us assume now that $\Gamma\neq \emptyset$.} Under this assumption,  let us define $$k:=\max(\Gamma)$$ and distinguish the following three cases, depending on the value of $\delta$:\\

\noindent $\textbf{Case 1:}$ The set
\begin{equation}
\label{AA}
\Delta:=\{ g_{ij} \in \mathrm{Coeff}(G_{\delta}) \mid j>0 \;\mbox{ and } \delta j+d_x^0-i<0 \}
\end{equation}
is not empty.

The above condition shows that $\Delta\subseteq \mathrm{Coeff}(D)$ and $\delta<k$. Moreover, $$a'=-\min\{\delta j+d_x^0-i\mid g_{ij}\in \Delta\}.$$ Hence, the points of intersection between the curves on $\mathbb{F}_{\delta}$ defined by the equations $X_0=0$ and $G_{\delta}(X_0,X_1;Y_0,Y_1)=0$ are the points $(0,1;y_0,y_1)$ satisfying the following condition \[
\sum_{j=1}^{d_y''}g_{\delta j + a'+d_x^0,j} y_0^{d_y-j} y_1^j=0.\]
In particular $(0,1;1,0)$ belongs to that intersection and, as a consequence, {\it $(0,1;1,0)$ is a dicritical singularity} of ${\mathcal F}_X^\delta$.\\

\noindent $\textbf{Case 2:}$ The set $\Delta$ in (\ref{AA}) is empty and there exists $g_{lm}\in \mathrm{Coeff}(G_{\delta})$ such that $m >0$ and $\delta m+d_x^0-l=0$.

In this case, since $\Delta$ is empty, $a'=0$ and $\delta\geq k$; moreover, since $\delta=\frac{l-d_x^0}{m}\in \Gamma$, we conclude that $\delta=k$. If $d_x^0=0$, then $C=0$ and $g_{00}\neq 0$; hence $G_{\delta}(0,1;1,0)\neq 0$, that is, {\it $(0,1;1,0)\in \mathbb{F}_k$ is not a dicritical singularity of ${\mathcal F}_X^k$}. When $d_x^0\neq 0$, the same thing happens because $g_{d_x^0 0}\neq 0$.
\\

\noindent $\textbf{Case 3:}$ $\delta j+d_x^0-i>0$ for all $g_{ij}\in \mathrm{Coeff}(G_{\delta})$ such that $j>0$.

Then $a'=0$ and $\delta >k$, and we distinguish the following subcases:
\begin{itemize}
\item[(3.1)] If $d_x^0>0$, then $g_{d_x^0 0}\neq 0$ and $(0,1;0,1)$ is the unique point where the curves with equations $G_{\delta}(X_0,X_1;Y_0,Y_1)=0$ and $X_0=0$ meet. This means that {\it $(0,1;0,1)$ is a dicritical singularity of ${\mathcal F}_{X}^\delta$ and the unique one belonging to the curve  $X_0=0$}.

\item[(3.2)] If $d_x^0=0$, then $G_{\delta}$ has the following shape:
$$
G_{\delta}(X_0,X_1;Y_0,Y_1)=g_{00} Y_0^{d_y}+\sum_{j=1}^{d_y^0}g_{0j}X_0^{\delta j} Y_0^{d_y-j} Y_1^j+X_0 H,
$$
where $H\in \mathbb{C}(\alpha,\beta)[X_0,X_1,Y_0,Y_1]$. Since $\delta>k\geq 0$, it is clear that $g_{00}\neq 0$ (because, otherwise, $X_0$ would divide $G_{\delta}$) and then {\it $(0,1;0,1)$ is the unique dicritical singularity of ${\mathcal F}_X^{\delta}$ belonging to the curve $X_0=0$}.
\end{itemize}

Notice that cases 1, 2 and 3 correspond to the following situations: $\delta<k$, $\delta=k$ and $\delta>k$.\\

Finally, define $\delta_1:=\lceil k \rceil$ and let us see that this integer satisfies conditions $(i)$, $(ii)$ and $(iii)$ of the statement.

If $k$ is an integer, then cases 1 and 3 show that conditions $(i)$ and $(ii)$ are satisfied for $\delta_1=k$. Hence, it only remains to show that $(0,1;1,0)\in \mathbb{F}_k$ is not a dicritical singularity of ${\mathcal F}_X^k$; but the value $\delta=k$ corresponds to Case 2 and
$(0,1;1,0)$ is not a dicritical singularity of ${\mathcal F}_X^\delta$ in that case.

If $k$ is not an integer then any $\delta\in \mathbb{Z}_{\geq 0}$ satisfies either Case 1 or Case 3; this fact shows that conditions $(i)$, $(ii)$ and $(iii)$ hold.

\end{proof}

\begin{remark}

Let $X$ be a complex planar vector field satisfying the conditions of Theorem \ref{gordo}. Then, the value $\delta_1$ provided by that theorem is the minimum nonnegative integer $\delta$ such that the point $(0,1;1,0)\in \mathbb{F}_{\delta}$ is not a dicritical singularity of ${\mathcal F}_X^{\delta}$.

\end{remark}

For each $\delta\in \mathbb{Z}_{\geq 0}$, the point $(0,1;0,1)\in \mathbb{F}_{\delta}$ (respectively, $(0,1;1,0)$) belongs to the affine chart $U_{11}$ (respectively, $U_{10}$), and the curve of $\mathbb{F}_{\delta}$ with equation $X_0=0$ does not meet neither $U_{00}$ nor $U_{01}$.  These facts allow us to write Theorem \ref{gordo} in terms of the planar vector fields induced by the restriction of ${\mathcal F}_X^\delta$ to the charts $U_{10}$ and $U_{11}$. Therefore, Theorem \ref{gordo} can be reformulated without any reference to Hirzebruch surfaces as follows:

\begin{corollary}\label{ccc}
Let $X$ be an algebraically integrable complex planar polynomial vector field such that $X\neq c \frac{\partial}{\partial y}$ for all $c\in \mathbb{C}\setminus \{0\}$. For each $\delta\in \mathbb{Z}_{\geq 0}$, let $A_{\delta,0},A_{\delta,1}, B_{\delta,0}$ and $B_{\delta,1}$ be the polynomials of $\mathbb{C}[X_0,X_1,Y_0,Y_1]$ obtained as the output of Algorithm \ref{algoritmo} from the input given by the pair $(\delta,\omega_X)$. Consider the planar vector fields $X^\delta_{10}$ and $X^\delta_{11}$ defined, respectively, by the following differential $1$-forms:
$$\omega^\delta_{10}:=A_{\delta,0}(x,1,1,y)dx+ B_{\delta,1}(x,1,1,y)dy,\;\mbox{ and }$$ $$\omega^\delta_{11}:=A_{\delta,0}(x,1,y,1)dx+B_{\delta,0}(x,1,y,1)dy.$$
Let $\delta_1$ be the minimum nonnegative integer such that the origin $(0,0)$ is not a dicritical singularity of $X_{10}^{\delta_1}$. Then, for all $\delta>\delta_1$:
\begin{itemize}
\item[(a)] the origin $(0,0)$ is the unique dicritical singularity of $X^\delta_{11}$ in the line defined by $x=0$, and
\item[(b)] the vector field $X_{10}^\delta$ has no dicritical singularity in the line defined by $x=0$.
\end{itemize}

\end{corollary}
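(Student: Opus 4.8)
The plan is to recognise $X^\delta_{10}$ and $X^\delta_{11}$ as the planar vector fields representing the foliation $\mathcal{F}_X^\delta$ on the affine charts $U_{10}$ and $U_{11}$ of $\mathbb{F}_\delta$, and then to read off the statement from Theorem~\ref{gordo}. First I would recall the affine coordinates of Subsection~\ref{Hirzebruch}: $U_{10}$ is identified with $\mathbb{C}^2$ via $(x,y)=\bigl(X_0/X_1,\,X_1^\delta Y_1/Y_0\bigr)$ and $U_{11}$ via $(x,y)=\bigl(X_0/X_1,\,Y_0/(X_1^\delta Y_1)\bigr)$. Under the first identification the point $(0,1;1,0)$ becomes the origin and the curve $X_0=0$ becomes the line $x=0$; under the second, $(0,1;0,1)$ becomes the origin and again $X_0=0$ becomes the line $x=0$. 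Since the bigraded $1$-form $\Omega_\delta$ of Proposition~\ref{la33} defines $\mathcal{F}_X^\delta$, trivialising it on $U_{10}$ (setting $X_1=Y_0=1$, whence $dX_1=dY_0=0$) gives the planar $1$-form $A_{\delta,0}(x,1,1,y)\,dx+B_{\delta,1}(x,1,1,y)\,dy=\omega^\delta_{10}$, and trivialising it on $U_{11}$ (setting $X_1=Y_1=1$, whence $dX_1=dY_1=0$) gives $A_{\delta,0}(x,1,y,1)\,dx+B_{\delta,0}(x,1,y,1)\,dy=\omega^\delta_{11}$; this is the same computation as the one proving that the restriction of $\Omega_\delta$ to $U_{00}$ is $\omega_X$ (Proposition~\ref{la33}). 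Hence $X^\delta_{10}$ and $X^\delta_{11}$ induce $\mathcal{F}_X^\delta$ on $U_{10}$ and $U_{11}$ respectively.

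Next I would use that being an (infinitely near) dicritical singularity is a purely local property of the foliation: it only involves the germ at the point and at its infinitely near points, and so it is preserved by the biholomorphisms identifying $U_{10}$ and $U_{11}$ with $\mathbb{C}^2$. Therefore, for every $\delta\in\mathbb{Z}_{\geq 0}$, the origin of $U_{10}$ is a dicritical singularity of $X^\delta_{10}$ if and only if $(0,1;1,0)\in\mathbb{F}_\delta$ is a dicritical singularity of $\mathcal{F}_X^\delta$; in particular the integer $\delta_1$ of the statement is well defined (Theorem~\ref{gordo} guarantees such a $\delta$ exists) and equals the $\delta_1$ furnished by Theorem~\ref{gordo}. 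I would also record that the curve $X_0=0$ of $\mathbb{F}_\delta$ is contained in $U_{10}\cup U_{11}$ (it meets neither $U_{00}$ nor $U_{01}$), and that $(0,1;0,1)\in U_{11}$ while $(0,1;0,1)\notin U_{10}$, since it has $Y_0=0$.

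Finally, I would fix $\delta>\delta_1$ and invoke Theorem~\ref{gordo}(i): $(0,1;0,1)$ is the unique dicritical singularity of $\mathcal{F}_X^\delta$ lying on the curve $X_0=0$. Transporting this statement to the chart $U_{11}$, where $(0,1;0,1)$ is the origin and $X_0=0$ is the line $x=0$, yields (a). For (b), the line $x=0$ of $U_{10}$ is part of the curve $X_0=0$, whose only dicritical singularity for $\mathcal{F}_X^\delta$ is $(0,1;0,1)\notin U_{10}$; hence $X^\delta_{10}$ has no dicritical singularity on the line $x=0$. The only genuinely delicate points are the identification in the first paragraph (checking the chart coordinates and that the dropped terms of $\Omega_\delta$ do vanish on each chart) and the observation that passing to charts leaves the dicritical nature of the relevant points unchanged; once these are in place, the corollary follows immediately from Theorem~\ref{gordo}.
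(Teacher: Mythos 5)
Your proposal is correct and follows exactly the route the paper takes: the paper offers no separate proof of Corollary~\ref{ccc}, merely observing that $(0,1;0,1)\in U_{11}$, $(0,1;1,0)\in U_{10}$, and that the curve $X_0=0$ misses $U_{00}$ and $U_{01}$, so that the corollary is Theorem~\ref{gordo} transported to the charts $U_{10}$ and $U_{11}$. You supply the same chart identifications and the (routine but worth recording) verifications that $\omega^\delta_{10}$ and $\omega^\delta_{11}$ are the restrictions of $\Omega_\delta$, that dicriticity is preserved under the chart biholomorphisms, and that $(0,1;0,1)\notin U_{10}$, which is all that is needed.
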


As a consequence of the above result we state the following corollary, which provides conditions forcing a planar vector field to be nonalgebraically integrable.

\begin{corollary}\label{ccc2}
Let $X$ be a complex planar polynomial vector field such that $X\neq c \frac{\partial}{\partial y}$ for all $c\in \mathbb{C}\setminus \{0\}$. For every $\delta\in \mathbb{Z}_{\geq 0}$, consider the planar vector fields $X_{10}^\delta$ and $X_{11}^\delta$ defined in Corollary \ref{ccc}. Let $\mathcal{N}$ be the set of nonnegative integers $\delta$ such that origin $(0,0)$ is not a dicritical singularity of $X_{10}^\delta$. When $\mathcal{N} \neq \emptyset$, set $\delta_1 := \min \, \mathcal{N}$. Then, $X$ is not algebraically integrable if at least one of the following conditions is satisfied:
\begin{itemize}
\item[(a)] $\mathcal{N}$ is empty.
\item[(b)] $\mathcal{N}$ is not empty and there exists a positive integer $\delta>\delta_1$ such that either the origin $(0,0)$ is not a dicritical singularity of $X^\delta_{11}$, or $(0,0)$ is a dicritical singularity of $X^\delta_{11}$ but not the unique one in the line defined by the equation $x=0$.
\item[(c)] $\mathcal{N}$ is not empty and there exists a positive integer $\delta>\delta_1$ such that $X^\delta_{10}$ has a dicritical singularity in the line defined by $x=0$.
\end{itemize}
\end{corollary}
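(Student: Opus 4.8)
\medskip
\noindent\textbf{Proof strategy.} The plan is to deduce everything from Theorem \ref{gordo} (equivalently, from Corollary \ref{ccc}) by contraposition: I would assume that $X$ \emph{is} algebraically integrable and show that then none of (a), (b), (c) can hold. Since $X\neq c\,\partial/\partial y$ for every $c\in\mathbb{C}\setminus\{0\}$, Theorem \ref{gordo} applies to $X$ and yields a nonnegative integer $\tilde\delta_1$ satisfying conditions (i)--(iii) there; by the Remark following Theorem \ref{gordo}, $\tilde\delta_1$ is the least nonnegative integer for which $(0,1;1,0)\in\mathbb{F}_{\tilde\delta_1}$ is not a dicritical singularity of $\mathcal{F}_X^{\tilde\delta_1}$.

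Next I would match this with the vector-field data of the statement. The point $(0,1;1,0)$ lies in the chart $U_{10}$ and corresponds to the origin under the identification of $U_{10}$ with $\mathbb{C}^2$; similarly $(0,1;0,1)$ lies in $U_{11}$ and corresponds to the origin there, and the curve $X_0=0$ meets $U_{10}$ and $U_{11}$ precisely in the lines $x=0$. Because ``(infinitely near) dicritical singularity'' is a purely local notion, invariant under the chart identification, and because $\omega^\delta_{10}$ (resp. $\omega^\delta_{11}$) defines the restriction of $\mathcal{F}_X^\delta$ to $U_{10}$ (resp. $U_{11}$) -- this is exactly the translation already carried out in deriving Corollary \ref{ccc} from Theorem \ref{gordo} -- the origin is a dicritical singularity of $X^\delta_{10}$ (resp. $X^\delta_{11}$) if and only if $(0,1;1,0)$ (resp. $(0,1;0,1)$) is a dicritical singularity of $\mathcal{F}_X^\delta$. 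In particular $\tilde\delta_1\in\mathcal{N}$, so $\mathcal{N}\neq\emptyset$, which already rules out (a); and then $\min\mathcal{N}$ coincides with $\tilde\delta_1$, so the integer $\delta_1$ of the statement is the one produced by Theorem \ref{gordo}.

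Finally, I would invoke Corollary \ref{ccc} for $X$: for every integer $\delta>\delta_1$ the origin is the \emph{unique} dicritical singularity of $X^\delta_{11}$ lying on the line $x=0$, and $X^\delta_{10}$ has \emph{no} dicritical singularity on the line $x=0$. The first of these contradicts condition (b) (which requires, for some $\delta>\delta_1$, that the origin either fails to be a dicritical singularity of $X^\delta_{11}$ or fails to be the only one on $x=0$), and the second contradicts condition (c) (which requires $X^\delta_{10}$ to have a dicritical singularity on $x=0$ for some $\delta>\delta_1$). Hence whenever (a), (b) or (c) holds, $X$ cannot be algebraically integrable.

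I do not expect a genuine obstacle here: the statement is simply a reformulation of the necessary conditions of Theorem \ref{gordo}/Corollary \ref{ccc} as sufficient conditions for non-integrability. The only point needing care is the bookkeeping that identifies the integer $\delta_1$ supplied by Theorem \ref{gordo} with $\min\mathcal{N}$, together with the (routine) observation that dicriticity at the relevant points transfers faithfully between the foliation $\mathcal{F}_X^\delta$ and the planar vector fields $X^\delta_{10}$, $X^\delta_{11}$ at the origin, since it is a local invariant and the curve $X_0=0$ is entirely contained in $U_{10}\cup U_{11}$.
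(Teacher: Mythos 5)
Your proposal is correct and follows exactly the route the paper intends: Corollary \ref{ccc2} is stated as an immediate consequence of Corollary \ref{ccc} (itself a chart-by-chart translation of Theorem \ref{gordo}), and your contrapositive argument, together with the identification of the $\delta_1$ of Theorem \ref{gordo} with $\min\mathcal{N}$ via the correspondence $(0,1;1,0)\leftrightarrow(0,0)\in U_{10}$ and $(0,1;0,1)\leftrightarrow(0,0)\in U_{11}$, is precisely the bookkeeping the paper leaves implicit. No gaps.
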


The following example gives a complex planar vector field which is not algebraically integrable, fact that we deduce from Corollary \ref{ccc2}.\\

\begin{example}\label{ex1}
Let $X$ be the planar vector field defined by the differential $1$-form $$\omega=(xy+y^2+5x^3y)dx + (-x^2 - xy + y^3)dy.$$

We run Algorithm \ref{algoritmo} using as input the pair $(\delta, \omega)$. The output is
\begin{align*}
A_{\delta,0}=&-X_0^2 X_1^2 Y_0^4 Y_1 - 5 X_1^4 Y_0^4 Y_1 - X_0^{\delta+3} X_1 Y_0^3 Y_1^2 -
\delta  X_0^2 X_1^2 Y_0^4 Y_1 -\delta X_0^{\delta+3} X_1 Y_0^3 Y_1^2\\
&+ \delta X_0^{3 \delta+4} Y_0 Y_1^4,\\
A_{\delta,1}=&X_0^3 X_1 Y_0^4 Y_1 + 5 X_0 X_1^3 Y_0^4 Y_1 + X_0^{\delta+4} Y_0^3 Y_1^2,\\
B_{\delta,0}=&X_0^3 X_1^2 Y_0^3 Y_1 + X_0^{\delta +4} X_1 Y_0^2 Y_1^2 - X_0^{3 \delta+5} Y_1^4, \; \mathrm{and}\\
B_{\delta,1}=&-X_0^3 X_1^2 Y_0^4 -  X_0^{\delta+4} X_1 Y_0^3 Y_1 + X_0^{3\delta+5} Y_0 Y_1^3.
\end{align*}
The vector field $X_{10}^\delta$ introduced in Corollary \ref{ccc} is given by the differential $1$-form
$$
\omega_{10}^\delta=(-5y-(1+ \delta)x^2y-(1 + \delta)x^{\delta+3}y^2+\delta x^{3\delta+4} y^4)dx+(-x^3-x^{\delta+4}y+x^{3\delta+5}y^3)dy.
$$
On the one hand, the origin is a simple singularity of $X_{10}^\delta$ for all $\delta\in \mathbb{Z}_{\geq 0}$ and then we deduce that $\delta_1=0$. On the other hand, the vector field $X_{11}^1$ is defined by the differential $1$-form
$$
\omega_{11}^1=(-5y^4- 2x^2y^4-2x^4y^3+x^7y)dx+(x^3y^3+x^5y^2-x^8)dy.
$$
Now, if we reduce the singularity $(0,0)$ of $\omega_{11}^1$ by successive blowups to get at most simple singularities (see Subsection \ref{sfh}) we  see that the origin is not a dicritical singularity of $X_{11}^1$. Indeed, to reduce the singularity $(0,0)$ we have to blow up 17 infinitely near points $\{p_i\}_{i=1}^{17}$ which constitute a simple chain, where $p_2$ is proximate to $p_1$; $p_3, p_4$ and $p_5$ are proximate to $p_2$, and $p_i$ is proximate to $p_{i-1}$ for $6 \leq i \leq 17$. No point $p_i$ is terminal dicritical, therefore $(0,0)$ is not dicritical. As a consequence, $X$ is not algebraically integrable by Part (b) of Corollary \ref{ccc2}.
\end{example}

\begin{remark}
Corollary \ref{ccc2} allows us to discard the existence of a rational first integral for certain complex planar vector fields. Necessary conditions for algebraic integrability are given in \cite[Corollary 5]{GineGrau} but they can only be applied to differential forms $A(x,y) dx + B(x,y) dy$, where $A(x,y)$ and $B(x,y)$ have the same degree $n$ and their homogeneous components of degree $n$ are coprime. Example \ref{ex1} does not satisfy those conditions, proving that the necessary conditions for algebraic integrability given in Corollary \ref{ccc} are different from those in \cite{GineGrau}.
\end{remark}

The conditions for algebraic integrability given in Theorem \ref{gordo} (and Corollary \ref{ccc}) are necessary, but not sufficient, as the following example shows.

\begin{example}
Let $X$ be the complex planar vector field defined by the differential $1$-form $$\omega=(y + xy)dx + (1+xy^2+x^2)dy.$$
The output of Algorithm \ref{algoritmo} when the input is the pair $(1, \omega)$ is
\begin{align*}
A_{1,0}&=X_0 Y_0^3 Y_1 - X_1 Y_0^3 Y_1 + X_0^2 X_1 Y_0 Y_1^3,\\
A_{1,1}&=X_0 Y_0^3 Y_1 + X_1 Y_0^3 Y_1,\\
B_{1,0} &=-X_0^2 Y_0^2 Y_1 - X_1^2 Y_0^2 Y_1 - X_0^3 X_1 Y_1^3 \; \mathrm{and}\\
B_{1,1}&=X_0^2 Y_0^3 + X_1^2 Y_0^3 + X_0^3 X_1 Y_0 Y_1^2.
\end{align*}

and when the input is $(\delta \neq 1, \omega)$, it is
\begin{align*}
A_{\delta,0}&=-X_0 X_1 Y_0^3 Y_1 - X_1^2 Y_0^3 Y_1 +\delta X_0^2 Y_0^3 Y_1 +\delta X_1^2 Y_0^3 Y_1 +\delta X_0^{2 \delta+1} X_1 Y_0 Y_1^3,\\
A_{\delta,1} &=X_0^2 Y_0^3 Y_1 + X_0 X_1 Y_0^3 Y_1,\\
B_{\delta,0}&=-X_0^3 Y_0^2 Y_1 - X_0 X_1^2 Y_0^2 Y_1 - X_0^{2 \delta+2} X_1 Y_1^3 \; \mathrm{and}\\
B_{\delta,1} &=X_0^3 Y_0^3 + X_0 X_1^2 Y_0^3 + X_0^{2 \delta+2} X_1 Y_0 Y_1^2.
\end{align*}

These outputs define the foliations $\mathcal{F}^\delta_X$, $\delta \geq 0$.

For a start, $(0,1;1,0) \in {\mathbb F}_0$ is a terminal dicritical singularity of $\mathcal{F}^0_X$ but $(0,1;1,0) \in {\mathbb F}_1$ is not a singularity of $\mathcal{F}^1_X$.

Assume now that $\delta>1$. The point $(0,1;0,1) \in {\mathbb F}_\delta$ is the unique ordinary singularity of ${\mathcal F}_X^\delta$ belonging to the curve with equation $X_0=0$.  Let us see that it is a dicritical singularity. Indeed, the restriction of ${\mathcal F}_X^\delta$ to the open set $U_{11}$ of $\mathbb{F}_{\delta}$ determines a vector field which is given by the differential $1$-form:
\[
\omega^\delta:=(-x y^3+(\delta-1)y^3+\delta x^2 y^3+\delta x^{2 \delta+1}y)dx - (x^3y^2+x y^2+x^{2 \delta+2})dy.
\]
The origin is a singularity of $\omega^\delta$. To reduce this singularity we have to blow up $\omega^\delta$ and its strict transforms using changes of local coordinates of the type $(x=x', y=x'y')$;  the strict transform of $\omega^\delta$ after $n\leq \delta-2$ blowups is
\begin{multline*}
\tilde{\omega}^{\delta}(n):=(-x y^3 +(\delta-n-1) y^3+(\delta-n) x^2 y^3+ (\delta-n)x^{2 (\delta-n)+1} y )dx\\
-(x^3 y^2+ x y^2 + x^{2 (\delta-n)+2})dy.
\end{multline*}
In particular, $$
xa^{\delta}_3(n)+yb^{\delta}_3(n)=(\delta-n-2)x y^3,$$
where $ \tilde{\omega}_3^{\delta}(n)=
a^{\delta}_3(n)dx  +b^{\delta}_3(n)dy=(\delta-n-1)y^3dx-xy^2dy$
is the first nonvanishing jet of $\tilde{\omega}^{\delta}(n)$. Therefore, after $n=\delta-2$ blowups the origin becomes terminal dicritical and thus $(0,1;0,1)$ is a dicritical singularity of ${\mathcal F}_X^\delta$.

Thus, we have just proved that the conditions given in the statement of Theorem \ref{gordo} hold for $\delta_1=1$. However $X$ is not algebraically integrable, as we are going to prove. Indeed, consider the complex projective plane $\mathbb{P}^2$ with projective coordinates $(\mathcal{X}:\mathcal{Y}:\mathcal{Z})$ and the foliation ${\mathcal F}$ on $\mathbb{P}^2$  defined by the homogeneous $1$-form
$$
\Omega=(-\mathcal{X}^3 \mathcal{Z} - \mathcal{X}^2 \mathcal{Y} \mathcal{Z} -2 \mathcal{X} \mathcal{Y}^2 \mathcal{Z} -\mathcal{Y} \mathcal{Z}^3) d\mathcal{X} + (\mathcal{X}^3 \mathcal{Z} + \mathcal{X}^2\mathcal{Y}\mathcal{Z})d\mathcal{Y} + (\mathcal{X}^4 + \mathcal{X}^2 \mathcal{Y}^2 + \mathcal{X}\mathcal{Y} \mathcal{Z}^2) d\mathcal{Z}
$$
whose restriction to the open subset of $\mathbb{P}^2$ defined by $\mathcal{X}\neq 0$ (identified with $\mathbb{C}^2$) gives rise to the initial vector field $X$. Assume that $X$, and therefore ${\mathcal F}$, has a rational first integral $f$ and let us see that we get a contradiction. With notation as in Subsection \ref{sfh}, consider the subset ${\mathcal D}_{\mathcal F}\subseteq  {\mathcal C}_{\mathcal F}$ given by the (infinitely near) dicritical singularities of ${\mathcal F}$ and let
 $\pi_{\mathcal F} : Z_{\mathcal F} \rightarrow \mathbb{P}^2 $ be the birational map obtained by blowing up the points in ${\mathcal D}_{\mathcal F}$. ${\mathcal D}_{\mathcal{F}}$ consists of $6$ points, $p_1,\ldots,p_6$, such that $p_1\in \mathbb{P}^2$, $p_2$ and $p_6$ belong to the first infinitesimal neighbourhood of $p_1$ and, for $i\in \{3,4,5\}$, $p_i$ is a free point of the first infinitesimal neighbourhood of $p_{i-1}$.
 
Set $D_{\tilde{f}}$ a general fiber of $\tilde{f}:= f \circ \pi_{\mathcal F}:Z_{\mathcal F}\rightarrow \mathbb{P}^1$ and $\tilde{\mathcal F}$ the strict transform of ${\mathcal F}$ by $\pi_{\mathcal F}$. Since the unique terminal dicritical singularities in ${\mathcal D}_{\mathcal F}$ are $p_5$ and $p_6$, the strict transforms on $Z_{\mathcal F}$ of the exceptional divisors $E_{p_i}$, $i\in \{2,3,4\}$, (respectively, $E_{p_5}$ and $E_{p_6}$) are (respectively, are not) invariant by $\tilde{\mathcal{F}}$ (see Theorem \ref{jmaa}(2)); then, by \cite[Proposition 1(b)]{GalMon2014}, the divisor $D_{\tilde{f}}$ is linearly equivalent to $dL^*- (\alpha+\beta)E_{p_1}^*-\alpha \sum_{i=2}^5 E_{p_i}^*-\beta E_{p_6}^*$ for some positive integers $d,\alpha,\beta$, where $L^*$ (respectively, $E_{p_i}^*$) denotes the pull-back on $\zf$ of a general line of $\mathbb{P}^2$ (respectively, the exceptional divisor $E_{p_i}$).
 
The lines $C_1$ and $C_2$ with respective equations $\mathcal{X}=0$ and $\mathcal{Z}=0$ are invariant curves of the foliation ${\mathcal F}$. Both lines pass through the point $p_1$, the strict transform of $C_1$ (respectively, $C_2$) on the surface obtained by blowing up at $p_1$ passes through $p_2$ (respectively, $p_6$) and $p_3$ does not belong to the strict transform of $C_1$. This means that the  strict transform $\tilde{C}_1$ (respectively, $\tilde{C}_2$) of $C_1$ (respectively, $C_2$) on $Z_{\mathcal F}$ is linearly equivalent to the divisor $L^*-E_{p_1}^*-E_{p_2}^*$ (respectively, $L^*-E_{p_6}^*$). Again by \cite[Proposition 1(b)]{GalMon2014} one has that $d-(\alpha+\beta)-\alpha=0$ and $d-(\alpha+\beta)-\beta=0$ and, therefore, $D_{\tilde{f}}$ is linearly equivalent to the divisor $dL^*- (2d/3)E_{p_1}^*-(d/3) \sum_{i=2}^6 E_{p_i}^*$. A canonical divisor of the surface $Z_{\mathcal F}$ is $K_{Z_{\mathcal F}}=-3L^*+\sum_{i=1}^6 E_{p_i}^*$. Applying Formula (3) in 
\cite{GalMon2014}, we deduce that the canonical sheaf of the foliation $\tilde{\mathcal F}$ (see \cite[Section 2]{GalMon2014}) is ${\mathcal O}_{Z_{\mathcal F}}(K_{\tilde{\mathcal F}})$, where $K_{\tilde{\mathcal F}}=2L^*-E_{p_1}^*-E_{p_2}^*-E_{p_5}^*-E_{p_6}^*$. Thus, $(K_{Z_{\mathcal F}}-K_{\tilde{\mathcal F}})\cdot D_{\tilde{f}}=-d\neq 0$, which is a contradiction with \cite[Proposition 1(b) and Equality (2)]{GalMon2014}.
\end{example}

\section{The Newton polytope of the generic algebraic invariant curve}
\label{La5}

Given a polynomial $f(x,y)=\sum a_{ij} x^i y^j\in \C[x,y]$, the \emph{Newton polytope} of $f$, denoted by $\mathrm{Newt}(f)$, is the convex hull of the set $\{(i,j)\mid a_{ij}\neq 0\} \subseteq \mathbb{R}^2$. 

Let $X$ be an algebraically integrable complex planar polynomial vector field. Then, the Newton polytope $\mathrm{Newt}(g)$ of the generic algebraic invariant curve $g(x,y)$, associated to a primitive rational first integral $f$ of $X$, does not depend on the choice of $f$.

\begin{definition}
The \emph{Newton polytope} $\mathrm{Newt}(X)$ of an algebraically integrable complex planar polynomial vector field $X$ is defined as $\mathrm{Newt}(g)$, where $g(x,y)$ is the generic algebraic invariant curve associated to any primitive rational first integral of $X$.
\end{definition}

The following result studies the Newton polytope of a vector field as above.

\begin{theorem}\label{gordo2}
Let $X=a(x,y)\frac{\partial}{\partial x}+ b(x,y)\frac{\partial}{\partial y}$ be an algebraically integrable complex planar polynomial vector field such that $X\neq c \frac{\partial}{\partial y}$ and $X\neq c \frac{\partial}{\partial x}$ for all $c\in \mathbb{C}\setminus \{0\}$.  Consider the vector field $X'$ obtained from $X$ by swapping the variables $x$ and $y$, that is, $$X'=b(y,x)\frac{\partial}{\partial x}+ a(y,x)\frac{\partial}{\partial y}.$$
Let $\delta_1$ (respectively, $\delta'_1$) be the nonnegative integer introduced  in Theorem \ref{gordo} for the vector field $X$ (respectively, $X'$). Then, with notation as in the proof of Theorem \ref{gordo}, $\mathrm{Newt}(X)$ is contained in the following region:
$$\left\{ (u,v)\in \mathbb{R}^2_{\geq 0}\mid u\leq d_x^0+\delta_1v\;\mbox{and}\; v\leq d_y^0+\delta'_1 u \right\},$$
where $\mathbb{R}^2_{\geq 0}$ denotes the set of points of $\mathbb{R}^2$ with nonnegative coordinates.

\end{theorem}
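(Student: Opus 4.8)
The plan is to distill from the proof of Theorem~\ref{gordo} a single linear inequality satisfied by the exponents occurring in the generic algebraic invariant curve, and then to apply this inequality both to $X$ and to its ``swapped'' companion $X'$.

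First I would fix a primitive rational first integral $f=f_1/f_2$ of $X$, set $g(x,y)=\alpha f_1+\beta f_2=\sum g_{ij}x^iy^j$, and retain the notation $d_x^0=\deg g(x,0)$, $\Gamma$, $k=\max\Gamma$ and $\delta_1=\lceil k\rceil$ from the proof of Theorem~\ref{gordo} (where, as there, $\Gamma=\emptyset$ forces $\delta_1=0$). The key point is that every pair $(i,j)$ with $g_{ij}\in\mathrm{Coeff}(g)$ satisfies
$$i\leq d_x^0+\delta_1 j.$$
Indeed, if $j=0$ this is $i\le d_x^0=\deg g(x,0)$; if $j>0$ and $i\le d_x^0$ it holds since $\delta_1 j\ge 0$; and if $j>0$ and $i>d_x^0$, then $(i-d_x^0)/j$ is a positive rational number belonging to $\Gamma$, so $(i-d_x^0)/j\le k\le\lceil k\rceil=\delta_1$. (If $\Gamma=\emptyset$ the third case cannot occur and one has $i<d_x^0$ whenever $j>0$, so the inequality again holds with $\delta_1=0$.) The set $\{(u,v)\in\R^2\mid u\le d_x^0+\delta_1 v\}$ is a convex half-plane containing $\{(i,j)\mid g_{ij}\neq 0\}$, hence it contains the convex hull of that set, which is $\mathrm{Newt}(g)=\mathrm{Newt}(X)$. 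Since trivially $\mathrm{Newt}(X)\subseteq\R^2_{\ge 0}$, we conclude $\mathrm{Newt}(X)\subseteq\{(u,v)\in\R^2_{\ge 0}\mid u\le d_x^0+\delta_1 v\}$.

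Next I would run the same argument for $X'$. A direct chain-rule computation gives $X'(h)(x,y)=(Xf)(y,x)=0$ for $h(x,y)=f(y,x)$, so $f(y,x)$ is a rational first integral of $X'$; and since the substitution $x\leftrightarrow y$ is an automorphism of $\C[x,y]$, $f(y,x)$ is also reduced and noncomposite, i.e.\ primitive. Hence the generic algebraic invariant curve of $X'$ is $g'(x,y)=g(y,x)$, with coefficients $g'_{ij}=g_{ji}$; in particular $\mathrm{Newt}(X')$ is the reflection of $\mathrm{Newt}(X)$ across $u=v$, and $d_x^0(X')=\deg g'(x,0)=\deg g(0,x)=\deg g(0,y)=d_y^0$. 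Applying the displayed inequality to $X'$, every $(i,j)$ with $g'_{ij}\neq 0$ satisfies $i\le d_x^0(X')+\delta'_1 j=d_y^0+\delta'_1 j$; rewriting via $g'_{ij}=g_{ji}$, every $(i,j)$ with $g_{ij}\neq 0$ satisfies $j\le d_y^0+\delta'_1 i$. Therefore $\mathrm{Newt}(X)\subseteq\{(u,v)\in\R^2_{\ge 0}\mid v\le d_y^0+\delta'_1 u\}$, and intersecting the two inclusions produces the stated region.

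Everything here is formal once the displayed inequality is established, and I expect the only delicate points to be bookkeeping ones: (a) checking that $i\le d_x^0+\delta_1 j$ does hold in every branch of the proof of Theorem~\ref{gordo}, including the degenerate cases ($\Gamma=\emptyset$, $d_x^0=0$); and (b) tracking how the quantities $d_x^0,d_y^0,\delta_1,\delta'_1$ transform under the swap $x\leftrightarrow y$ --- note in particular that the hypothesis $X\neq c\,\partial/\partial x$ is exactly what ensures $X'\neq c\,\partial/\partial y$, so that Theorem~\ref{gordo} applies to $X'$ at all. Point (b), the index bookkeeping, is where a slip would be easiest to make.
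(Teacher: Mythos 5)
Your proposal is correct and follows essentially the same route as the paper's proof: the same case analysis on $(i-d_x^0)/j$ via the set $\Gamma$ and $k=\max\Gamma$ to get $i\le d_x^0+\delta_1 j$, followed by the symmetric application to $X'$ with generic invariant curve $g(y,x)$. Your extra remarks on convexity of the half-planes and on why Theorem \ref{gordo} applies to $X'$ are points the paper leaves implicit, but they do not change the argument.
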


\begin{proof}

Let $f=\frac{f_1(x,y)}{f_2(x,y)}$ be a primitive rational first integral of $X$ and set $$
g(x,y):=\alpha f_1(x,y)+\beta f_2(x,y)=\sum_{ij} g_{ij} x^i y^j\in \mathbb{C}(\alpha,\beta)[x,y]
$$
the associated generic algebraic invariant curve of $X$ as expressed in (\ref{rrr}).

Keep notation as given in the proof of Theorem \ref{gordo}. If the set $\Gamma$ defined in (\ref{gamma}) is empty,  then $\delta_1=0$ and $i\leq d_x^0$ for any nonzero coefficient $g_{ij}$ of the generic invariant curve (see the proof of Theorem \ref{gordo}); therefore the inequality $i\leq d_x^0+\delta_1 j$ holds trivially.

Assume now that $\Gamma$ is not empty and  let $k$ be the maximum of $\Gamma$ (notice that $\delta_1=\lceil k \rceil$). Pick $g_{ij}\in \mathrm{Coeff}(g)$.  If $j>0$ and $\frac{i-d_x^0}{j}\geq 0$ then $\frac{i-d_x^0}{j}\in \Gamma$ and therefore
$$
i\leq k j+d_x^0\leq d_x^0+\delta_1j.
$$
If $j>0$ and $\frac{i-d_x^0}{j}< 0$ then $i<d_x^0\leq d_x^0+\delta_1 j$.  Finally, if $j=0$, $i\leq d_x^0$ by the definition of $d_x^0$.

Reasoning analogously with the vector field $X'$ and since it is algebraically integrable with generic algebraic curve $g(y,x)$, it holds that $j\leq d_y^0+\delta'_1 i$ for all $(i,j)$ such that $g_{ij}\in \mathrm{Coeff}(g)$. This concludes the proof.

\end{proof}

As a consequence of Theorem \ref{gordo2}, we provide, under certain assumptions, a bound on the degree of a primitive rational first integral of an algebraically integrable planar vector field that depends only on the values $\delta_1$, $\delta'_1$, $d_x^0$ and $d_y^0$.

\begin{corollary}
\label{El52}
With assumptions and notation as given in Theorem \ref{gordo2}, suppose that $\delta_1=0$  (respectively, $\delta'_1=0$). Then the degree of a primitive rational first integral of $X$ is bounded from above by $(1+\delta'_1)d_x^0+d_y^0$ (respectively, $(1+{\delta}_1)d_y^0+d_x^0$).
\end{corollary}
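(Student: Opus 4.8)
The plan is to read off the bound directly from Theorem~\ref{gordo2}, together with the elementary fact that the degree of a primitive rational first integral of $X$ equals the total degree of its associated generic algebraic invariant curve.

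First I would fix a primitive rational first integral $f=f_1/f_2$ of $X$ and let $g(x,y)=\alpha f_1(x,y)+\beta f_2(x,y)=\sum g_{ij}x^iy^j\in\mathbb{C}(\alpha,\beta)[x,y]$ be the associated generic algebraic invariant curve. Recall that $\deg(f)=\max\{\deg f_1,\deg f_2\}$, and that this value is independent of the choice of $f$ among primitive first integrals. I claim $\deg(f)$ equals the total degree of $g$: writing $d=\max\{\deg f_1,\deg f_2\}$, the homogeneous component of degree $d$ of $g$ is $\alpha[f_1]_d+\beta[f_2]_d$, where $[f_i]_d$ denotes the degree-$d$ homogeneous part of $f_i$; since at least one of $[f_1]_d,[f_2]_d$ is nonzero and $\alpha,\beta$ are algebraically independent over $\mathbb{C}$, this component does not vanish in $\mathbb{C}(\alpha,\beta)[x,y]$, so $\deg g=d=\deg f$.

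Next, assuming $\delta_1=0$, I would invoke Theorem~\ref{gordo2}: every $(i,j)$ with $g_{ij}\in\mathrm{Coeff}(g)$ lies in $\mathrm{Newt}(X)$, hence satisfies $i\le d_x^0+\delta_1 j=d_x^0$ and $j\le d_y^0+\delta'_1 i$. Consequently
\[
i+j\;\le\; d_x^0+j\;\le\; d_x^0+d_y^0+\delta'_1 i\;\le\; d_x^0+d_y^0+\delta'_1 d_x^0\;=\;(1+\delta'_1)d_x^0+d_y^0 .
\]
Taking the maximum over all such $(i,j)$ gives $\deg g\le (1+\delta'_1)d_x^0+d_y^0$, so by the previous paragraph $\deg f=\deg g\le (1+\delta'_1)d_x^0+d_y^0$. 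The case $\delta'_1=0$ is symmetric: then every $(i,j)$ with $g_{ij}\neq 0$ satisfies $j\le d_y^0$ and $i\le d_x^0+\delta_1 j$, so $i+j\le d_x^0+(1+\delta_1)j\le d_x^0+(1+\delta_1)d_y^0$.

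Since Theorem~\ref{gordo2} already carries all the geometric content, there is no genuine obstacle here; the only step deserving a word of care is the identification $\deg f=\deg g$, i.e.\ checking that passing to the generic member of the pencil does not lower the degree, which—as indicated—follows at once from the algebraic independence of $\alpha$ and $\beta$.
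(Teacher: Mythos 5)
Your argument is correct and is exactly the routine deduction the paper intends (the corollary is stated without an explicit proof, as an immediate consequence of Theorem \ref{gordo2}): you combine the two Newton-polytope inequalities $i\le d_x^0+\delta_1 j$ and $j\le d_y^0+\delta'_1 i$ with $\delta_1=0$ to bound $i+j$, and you correctly justify the identification $\deg f=\deg g$ via the algebraic independence of $\alpha$ and $\beta$, which prevents cancellation of the top-degree homogeneous parts. Nothing is missing.
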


\begin{remark}
Let $X$ be an algebraically integrable planar vector field. Then the value $d_x^0$ (respectively, $d_y^0$) coincides with the total intersection number between a general integral algebraic invariant curve of $X$ and the line $y=0$ (respectively, $x=0$).
\end{remark}

We conclude this paper with a result about  complex planar vector fields $X$ having a rational first integral of a specific type. Firstly, notice that $X$ has a primitive rational first integral of the form
\begin{equation}\label{form}
\frac{a+xyH_1(x,y)}{b+xyH_2(x,y)},
\end{equation}
with $H_1,H_2\in \mathbb{C}[x,y]$ and $(a,b) \in \mathbb{C}^2 \setminus \{(0,0)\}$ if and only if $d_x^0=d_y^0=0$.
\begin{corollary}\label{ccc3}
Let $X$ be a complex planar vector field and keep notation as given in Theorem \ref{gordo2}.
\begin{itemize}
\item[(a)] If $X$ has a primitive rational first integral of type (\ref{form}), then the Newton polytope of $X$, $\mathrm{Newt}(X)$, is contained in the convex cone $$\Psi_X:=\left\{ (u,v)\in \mathbb{R}^2_{\geq 0}\mid u\leq \delta_1 v\;\mbox{and}\; v\leq \delta'_1 u \right\},$$
which can be computed only from $X$.

\item[(b)] If $\delta_1=0$ or $\delta'_1=0$, then $X$ has no primitive rational first integral of type (\ref{form}).

\end{itemize}
\end{corollary}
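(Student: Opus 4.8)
```latex
\noindent\textbf{Proof proposal for Corollary \ref{ccc3}.}

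The plan is to deduce both parts directly from Theorem \ref{gordo2} together with the characterization of first integrals of type (\ref{form}) in terms of the invariants $d_x^0$ and $d_y^0$. First I would recall the equivalence stated just before the corollary: $X$ has a primitive rational first integral of type (\ref{form}) if and only if $d_x^0 = d_y^0 = 0$. The point is that the numerator $a + xyH_1(x,y)$ and denominator $b + xyH_2(x,y)$ have the property that their restrictions to $y=0$ are the constants $a$ and $b$, and similarly on $x=0$; hence the generic member $g(x,y) = \alpha f_1 + \beta f_2$ satisfies $g(x,0) = \alpha a + \beta b \in \mathbb{C}(\alpha,\beta)$, a nonzero constant, so $d_x^0 = \deg g(x,0) = 0$, and symmetrically $d_y^0 = 0$. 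Conversely, if $d_x^0 = d_y^0 = 0$ then $g_{i0} = 0$ for $i>0$ and $g_{0j} = 0$ for $j>0$, so every nonconstant monomial of $g$ is divisible by $xy$; writing $g = g_{00} + xy\,\tilde g$ and specializing $(\alpha,\beta)$ exhibits a primitive rational first integral of the required shape.

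For part (a), assume $X$ has a primitive rational first integral of type (\ref{form}); then by the above $d_x^0 = d_y^0 = 0$. Substituting these values into the region provided by Theorem \ref{gordo2}, the inequalities $u \le d_x^0 + \delta_1 v$ and $v \le d_y^0 + \delta'_1 u$ become $u \le \delta_1 v$ and $v \le \delta'_1 u$, so $\mathrm{Newt}(X)$ is contained in $\Psi_X$. That $\Psi_X$ can be computed only from $X$ follows because $\delta_1$ and $\delta'_1$ are the minimal nonnegative integers for which the origin fails to be a dicritical singularity of $X_{10}^{\delta_1}$ and $(X')_{10}^{\delta'_1}$ respectively (Theorem \ref{gordo}, reformulated in Corollary \ref{ccc}), and these vector fields are produced algorithmically from $X$ via Algorithm \ref{algoritmo}, with dicriticity decided by the reduction-of-singularities procedure of Subsection \ref{sfh}; no knowledge of a first integral is needed.

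For part (b), I would argue by contraposition. Suppose $X$ has a primitive rational first integral of type (\ref{form}); then $d_x^0 = d_y^0 = 0$ as established, and $\mathrm{Newt}(X) \subseteq \Psi_X$ by part (a). Now observe that if $\delta_1 = 0$, the cone $\Psi_X$ reduces to $\{(u,v) \in \mathbb{R}^2_{\geq 0} \mid u \le 0,\ v \le \delta'_1 u\}$, i.e.\ $u = 0$ and then $v \le 0$, forcing $\Psi_X = \{(0,0)\}$; similarly if $\delta'_1 = 0$ then $\Psi_X = \{(0,0)\}$. But $\mathrm{Newt}(X) = \{(0,0)\}$ would mean $g(x,y)$ is a constant in $\mathbb{C}(\alpha,\beta)$, contradicting that $g$ is the generic algebraic invariant curve of a genuine (nonconstant) rational first integral. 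Hence no such first integral exists, proving (b). The only mildly delicate point — and the one I would write out most carefully — is the equivalence between first integrals of type (\ref{form}) and the vanishing $d_x^0 = d_y^0 = 0$, since one must check that the genericity of $(\alpha,\beta)$ (used as in the proof of Lemma \ref{llll} to guarantee irreducibility of the specialized polynomial) is compatible with extracting the constants $a,b$; everything else is a direct substitution into Theorem \ref{gordo2}.
```
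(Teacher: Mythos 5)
Your argument is correct and takes essentially the same route as the paper: part (a) is the direct substitution of $d_x^0=d_y^0=0$ into the region of Theorem \ref{gordo2}, and part (b) is the observation that $\delta_1=0$ or $\delta'_1=0$ collapses $\Psi_X$ to $\{(0,0)\}$, which is absurd since the generic invariant curve is nonconstant. The additional detail you supply --- the equivalence between first integrals of type (\ref{form}) and the vanishing of $d_x^0$ and $d_y^0$ --- is exactly the ``notice that'' the paper states without proof immediately before the corollary, so your write-up is just a fuller version of the paper's two-line proof.
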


\begin{proof}

Part $(a)$ is straightforward from Theorem \ref{gordo2}. Part $(b)$ follows because, if $X$ had a rational first integral of the form (\ref{form}) and either $\delta_1=0$ or $\delta'_1=0$, then, by Part $(a)$, the set $\Psi_X$ would be $\{(0,0)\}$, which is a contradiction.

\end{proof}

\bibliographystyle{plain}
\bibliography{MIBIBLIO}

\end{document}